\newcolumntype{P}[1]{>{\raggedright\arraybackslash}p{#1}}
\theoremstyle{plain}
\newtheorem{theorem}{Theorem}[section]
\newtheorem{remark}{Remark}[section]
\newtheorem{proposition}{Proposition}[section]
\newtheorem{corollary}{Corollary}[section]
\numberwithin{equation}{section}
\newcommand{\R}{\mathbb{R}}
\newcommand{\N}{\mathbb{N}}
\newcommand{\E}{\mathbb{E}}
\title{Optimal Convergence of the Discrepancy Principle for polynomially and exponentially ill-posed Operators under White Noise}
\author{ Tim Jahn\thanks{Institut f\"ur Mathematik, Goethe-Universit\"at Frankfurt, Germany (\texttt{jahn@math.uni-frankfurt.de})}}
\begin{document}

\maketitle

\begin{abstract}
We consider a linear ill-posed equation in the Hilbert space setting under white noise. Known convergence results for the discrepancy principle are either restricted to Hilbert-Schmidt operators (and they require a self-similarity condition for the unknown solution $\hat{x}$, additional to a classical source condition) or to polynomially ill-posed operators (excluding exponentially ill-posed problems). In this work we show optimal convergence for a modified discrepancy principle for both polynomially and exponentially ill-posed operators (without further restrictions) solely under either H\"older-type or logarithmic source conditions. In particular, the method includes only a single simple hyper parameter, which does not need to be adapted to the type of ill-posedness.\\
\textbf{Key words}: statistical inverse problems, non-Bayesian approach, discrepancy principle, convergence, optimality
\end{abstract}

\section{Introduction}

Let $K:\mathcal{X}\to\mathcal{Y}$ be a compact operator with dense range between infinite-dimensional Hilbert spaces. We aim to solve the following equation

$$Kx = y^\delta$$

for $y^\delta = \hat{y}+\delta Z$ a noisy perturbation of the unknown true data $\hat{y}= K\hat{x}$, with $\hat{x}=K^+\hat{y}$ the minimum norm solution (thus $K^+$ denotes the Moore-Penrose inverse of $K$). Here $Z$ is white noise, i.e. it holds that

$$\E\left[(Z,y)\right]= 0\quad\mbox{and}\quad \E\left[(Z,y)(Z,y')\right] = (y,y')$$

for all $y, y'\in\mathcal{Y}$, and $\delta>0$ denotes the noise level. We use spectral cut-off to determine an approximation to the unknown solution $\hat{x} $ and thus assume, that we know $(\sigma_j,u_j,v_j)$ the singular value decomposition of $K$, i.e. $\sigma_1\ge \sigma_2\ge ... >0$ is a decreasing sequence, $(u_j)_{j\in\N}, (v_j)_{j\in\N}$ are orthonormal bases of $\mathcal{Y}$ and $\mathcal{N}(K)^\perp \subset \mathcal{X}$ respectively, and there holds $Kv_j=\sigma_j u_j$, $K^*u_j=\sigma_j v_j$. Consequently,

$$x_k^\delta: = \sum_{j=1}^k \frac{(y^\delta,u_j)}{\sigma_j} v_j$$

is our approximation to $\hat{x}$, and the task is to determine a good choice of the truncation level $k$, dependent on the measurement $y^\delta$ and the noise level $\delta$. Because the sum $\sum_{j=1}^\infty (Z,u_j)^2$ is almost surely infinite, we cannot directly apply the discrepancy principle \cite{morozov1968error} to determine $k$. We thus truncate the sum and  discretize (as an additional regularization). Specifically, for $m\in\N$ it holds that $\E\left[\sum_{j=1}^m (y^\delta-\hat{y},u_j)^2\right] = \sum_{j=1}^m \delta^2\E(Z,u_j)^2 = m \delta^2$, so we define for fixed $\tau>1$ the classical discrepancy principle to the discretized measurements

\begin{equation}\label{int:eq1}
k^\delta_{dp}(m) :=\min\left\{0\le k\le m~:~\sqrt{\sum_{j=k+1}^m (y^\delta,u_j)^2} \le \tau \sqrt{m}\delta\right\}.
\end{equation} 

In order to determine our final approximation we have to choose the discretization level $m$.The main results (Theorem \ref{th2} and \ref{th4}) state, that for the adaptive choice

\begin{equation}\label{int:eq2}
k^\delta_{dp}:=\max_{m\in\N} k_{dp}^\delta(m)
\end{equation}

we obtain the either order optimal or even asymptotically optimal rate of convergence (in probability) under the natural source conditions for the two main types of linear ill-posed problems. These are polynomially ill-posed  and exponentially ill-posed problems (also called mildly and severely ill-posed respectively). For the sake of simplicity we assume that, in the first case, the singular values fulfill $\sigma_j^2 = j^{-q}$ for some $q>0$ and in the second case $\sigma_j^2 = e^{-aj}$ for $a>0$. Note that for polynomially ill-posed problems we do not assume that $q>1/2$, so $K$ might be a non-Hilbert-Schmidt operator. In practice the singular values of the problem are not analytically given and usually not exactly of the above specific types. So for general decaying singular values we will at least prove in Theorem \ref{th5} that the method is convergent, without giving rates. An important practical problem of our approach \eqref{int:eq1} is, that one cannot perform the maximization over all $m\in\N$ in \eqref{int:eq2}. We will comment on that and the fact that $k_{dp}^\delta$ is well-defined below.

As mentioned above, convergence rates are obtained only under additional source conditions. These are certain subspaces of $\mathcal{X}$, in which the true solution is supposed to reside. For polynomially ill-posed problems these are H\"older-type conditions (see e.g. \cite{EnglHankeNeubauer:1996} and the references therein)

\begin{equation}\label{int:eq3}
\mathcal{X}_{\nu,\rho}:=\left\{ (K^*K)^\frac{\nu}{2}\xi~:~\xi\in\mathcal{X}\right\}
\end{equation}

for $\nu,\rho>0$ (so $\hat{x}\in\mathcal{X}_{\nu,\rho}$ has the following representation $\hat{x}=\sum_{j=1}^\infty \sigma_j^{\nu}(\xi,v_j) v_j$ with $\|\xi\|\le \rho$). In the scenario of exponentially ill-posed problems it is well known, that H\"older-type smoothness conditions are often too restrictive. E.g., for the severely ill-posed problem of the inverse heat equation any H\"older-type source condition for the true solution $\hat{x}$ would imply that it is infinitely often differentiable.  Here, a natural choice are so-called logarithmic source conditions \cite{hohage2000regularization}

\begin{equation}\label{int:eq4}
\mathcal{X}_{p,\rho}=\left\{ \left(-\log(K^*K)\right)^{-\frac{p}{2}}\xi~:~\xi\in\mathcal{X},~\|\xi\|\le \rho\right\}
\end{equation}

for $p,\rho>0$. 

In the literature plenty of work has been done on linear ill-posed problems under white noise, see e.g. \cite{bissantz2007convergence} and \cite{cavalier2011inverse} for an overview. Among the first adaptive methods studied were, cross validation \cite{wahba1977practical}, unbiased or penalized empirical risk minimization \cite{cavalier2002oracle,cavalier2006risk}, Lepski's balancing principle \cite{mathe2006regularization} and others. Here optimal rates are usually obtained only up to a logarithmic factor. Also, the hyperparameters for the methods have to be chosen differently for mildly and severely ill-posed problems. Our proposed modified discrepancy principle has only one free parameter $\tau$, which can be chosen freely and independent of the degree of illposedness. This might be beneficial in practice, since usually the singular values of the problem will not behave exactly as the both cases considered here, and thus might not be classified unambiguously. This is also notable in light of the fact, that also in the deterministic case the discrepancy principle has to be adapted properly for exponentially ill-posed problems, see e.g. \cite{hohage2000regularization}. Moreover, we do not loose a logarithmic factor in the convergence rates (which is mainly due to the fact that we consider a different type of convergence). More recently variants of the discrepancy principle were studied for statistical inverse problems. In \cite{blanchard2012discrepancy,lu2014discrepancy}, a modified discrepancy principle was introduced. It is based on symmetrization and thus restricted to Hilbert-Schmidt operators. Also, the true solution $\hat{x}$ has to fulfill a self-similarity condition. Relatively new approaches in \cite{blanchard2018optimal,blanchard2018early} also use discretization to apply the discrepancy principle, as we do here. There the main goal was to minimize computational costs. Optimal rates are achieved only for polynomially ill-posed problems and the true solution must not be too smooth. In particular, the method will not work directly for exponentially ill-posed problems, as explained e.g. in Remark 3.9 of \cite{blanchard2018early}. However, the computational costs there are substantially smaller than in our case. It would be interesting whether one could combine the benefits of both methods.
Finally, we want to mention that in the above works usually bounds in $L^2$ (a.k.a in integrated mean squared error) are provided, under the assumption that the white noise is Gaussian (e.g., that $\E(Z,y)$ is Gaussian for all $y\in\mathcal{Y}$). This is often the reason for the logarithmic correction term in the rates mentioned above. We assume solely a finite second moment, but provide only rates which hold with high probability. It would be interesting whether the approach could be adapted such that in provides $L^2$ rates under Gaussian noise. Also note, that we assume that the noise level $\delta$ is ad hoc known. In case we have access to multiple measurements we may drop this assumption and use the average of those measurements as our data, and estimate the noise level in a natural way, see e.g. \cite{harrach2020beyond,harrach2020regularising,jahn2021increasing}.

\begin{remark}
First of all it is not directly clear, that $k_{dp}^\delta$ is well-defined. However, since

$$\sqrt{\sum_{j=1}^{m}(y^\delta,u_k)^2}\le \sqrt{\sum_{j=1}^{m}(\hat{y},u_k)^2}+\sqrt{\sum_{j=1}^m(\hat{y}-y^\delta,u_j)^2} \approx \|\hat{y}\| + \sqrt{m} \delta \le \tau \sqrt{m} \delta$$

for $m$ large, we see that $k_{dp}^\delta(m) \to 1$ (a.s.) as $m\to\infty$ (compare to the proof of Proposition \ref{prop0}). This assures that $k_{dp}^\delta<\infty$ a.s. and moreover, that there exists (random) $m(\delta,\|\hat{y}\|)$ with

$$\max_{m\in\N} k_{dp}^\delta(m) = \max_{m\le m(\delta,\|\hat{y}\|)} k_{dp}^\delta(m).$$ 

It would be desirable to have a rough idea of how large $m(\delta,\rho)$ will be, unfortunately we cannot give a satisfying solution for that. One natural idea to obtain an upper bound would be to balance the measurement error $\sqrt{\sum_{j=1}^m(y^\delta-\hat{y},u_j)^2}$ and the discretization error $\sqrt{\sum_{j=m+1}^\infty(\hat{y},u_j)^2}$. This would mean to determine $m$ such that roughly

$$\sqrt{m} \delta \approx \sigma_m \|\hat{x}\|,$$

since $(\hat{y},u_j) = \sigma_j(\hat{x},v_j)$. However, this can only be achieved if at least an upper bound for $\|\hat{x}\|$ is available.
 Another (heuristic) possibility is based on the observation, that for $m$ small we have that $k_{dp}^\delta(m) \approx m$, whereas $k_{dp}^\delta(m) \to 1$ as $m\to\infty$. Thus we could increase $m$ gradually, until $k_{dp}^\delta(m)/m$ is small.
\end{remark}

\section{Main Results}
We formulate the first Theorem, which states that our modified discrepancy principle resembles a convergent regularization method for arbitrary compact $K$ with dense image.

\begin{theorem}\label{th5}
Assume that $K$ is compact with dense range and let $\hat{y}\in\mathcal{R}(K)$. Let $\tau>1$ and let $k_{dp}^\delta$ be the truncation level determined by the discrepancy principle as in \eqref{int:eq1} and \eqref{int:eq2}. Then, for all $\varepsilon>0$ there holds

$$\mathbb{P}\left(\|x_{k^\delta_{dp}}^\delta - \hat{x}\|\le \varepsilon\right)\to 1$$

as $\delta\to0$.

\end{theorem}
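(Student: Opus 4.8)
The plan is to compare the estimator with the noise-free truncated solution $x_k^0:=\sum_{j=1}^k(\hat x,v_j)v_j=\sum_{j=1}^k\frac{(\hat y,u_j)}{\sigma_j}v_j$ and use Pythagoras, which gives the exact decomposition
$$\|x_k^\delta-\hat x\|^2=\delta^2\sum_{j=1}^k\frac{(Z,u_j)^2}{\sigma_j^2}+\sum_{j>k}(\hat x,v_j)^2=:V_k(\delta)+B_k .$$
We may assume $\hat x\neq 0$. Two elementary facts enter repeatedly: (a) $B_k\searrow 0$ as $k\to\infty$ since $\hat x\in\mathcal N(K)^\perp$; and (b) the law of large numbers $\frac1m\sum_{j=1}^m(Z,u_j)^2\to 1$ a.s.\ for the unit-variance uncorrelated coordinates $(Z,u_j)_j$, so that, fixing once and for all some $\rho\in(1,\tau)$, almost surely $N_m:=\sum_{j=1}^m(Z,u_j)^2\le\rho^2 m$ for all large $m$. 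Given $\varepsilon>0$ I will choose cut-offs $K_0\le K_1$ (depending only on $\varepsilon,\tau,\rho,\hat x$) and show that with probability tending to one both $B_{k^\delta_{dp}}\le\varepsilon^2/4$ and $V_{k^\delta_{dp}}(\delta)\le\varepsilon^2/4$.

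First I would bound $k^\delta_{dp}$ from below. Let $J^*:=\sup\{j:(\hat x,v_j)\neq 0\}\in\N\cup\{\infty\}$ and fix $K_0\le J^*$ with $B_{K_0}\le\varepsilon^2/4$ (if $J^*<\infty$ take $K_0=J^*$, so that $B_{K_0}=0$; if $J^*=\infty$ such $K_0$ exists by (a)). Pick any $j_0\ge K_0$ with $(\hat x,v_{j_0})\neq 0$ and use the discretisation level $m=j_0$ in \eqref{int:eq1}. For every $0\le k<j_0$ one has $\sum_{i=k+1}^{j_0}(y^\delta,u_i)^2\ge (y^\delta,u_{j_0})^2$, and since $(y^\delta,u_{j_0})=\sigma_{j_0}(\hat x,v_{j_0})+\delta(Z,u_{j_0})\to\sigma_{j_0}(\hat x,v_{j_0})\neq 0$ in probability while $\tau^2 j_0\delta^2\to 0$, we get $\PP(\sum_{i=k+1}^{j_0}(y^\delta,u_i)^2>\tau^2 j_0\delta^2\text{ for all }k<j_0)\to 1$. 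On that event $k^\delta_{dp}(j_0)=j_0$, hence $k^\delta_{dp}\ge j_0\ge K_0$, which already yields $B_{k^\delta_{dp}}\le B_{K_0}\le\varepsilon^2/4$; moreover, since any maximiser $m^*=m^*(\delta)$ in \eqref{int:eq2} satisfies $m^*\ge k^\delta_{dp}$, this also shows $m^*(\delta)\to\infty$.

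Next I would bound $V_{k^\delta_{dp}}(\delta)$, distinguishing two regimes for a cut-off $K_1\ge K_0$ to be fixed below. If $k^\delta_{dp}\le K_1$, then $V_{k^\delta_{dp}}(\delta)\le\delta^2\sum_{j=1}^{K_1}\frac{(Z,u_j)^2}{\sigma_j^2}$, which tends to $0$ in probability (its expectation is $\delta^2\sum_{j\le K_1}\sigma_j^{-2}\to 0$). If instead $k^\delta_{dp}>K_1$, then with $k:=k^\delta_{dp}=k^\delta_{dp}(m^*)\ (\ge 1)$ and $m:=m^*$ the discrepancy principle just before stopping gives $\sqrt{\sum_{i=k}^{m}(y^\delta,u_i)^2}>\tau\sqrt m\,\delta$; splitting $y^\delta=\hat y+\delta Z$, using the triangle inequality in $\R^{m-k+1}$, the bound $\sum_{i\ge k}\sigma_i^2(\hat x,v_i)^2\le\sigma_k^2 B_{k-1}$ and $N_m\le\rho^2 m$, we obtain
$$\tau\sqrt m\,\delta<\sigma_k\sqrt{B_{k-1}}+\delta\sqrt{N_m}\le\sigma_k\sqrt{B_{k-1}}+\rho\sqrt m\,\delta,\qquad\text{hence}\qquad\frac{\delta^2 m}{\sigma_k^2}<\frac{B_{k-1}}{(\tau-\rho)^2}.$$
Since $\sigma_j\ge\sigma_k$ for $j\le k$ and $N_k\le N_m\le\rho^2 m$, this gives $V_{k^\delta_{dp}}(\delta)\le\frac{\delta^2}{\sigma_k^2}N_k\le\rho^2\frac{\delta^2 m}{\sigma_k^2}<\frac{\rho^2}{(\tau-\rho)^2}B_{k-1}\le\frac{\rho^2}{(\tau-\rho)^2}B_{K_1}$. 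Now fix $K_1\ge K_0$ with $\frac{\rho^2}{(\tau-\rho)^2}B_{K_1}\le\varepsilon^2/4$ (possible by (a)). Combining both regimes with the lower bound of the previous paragraph, on an event of probability tending to one $\|x_{k^\delta_{dp}}^\delta-\hat x\|^2=V_{k^\delta_{dp}}(\delta)+B_{k^\delta_{dp}}\le\varepsilon^2/4+\varepsilon^2/4<\varepsilon^2$, which is the assertion.

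The delicate point is the uniform control of the noise needed in the last step through the term $N_{m^*(\delta)}$: one must ensure $N_{m^*(\delta)}\le\rho^2 m^*(\delta)$ with probability tending to one for the \emph{random} maximiser $m^*(\delta)$, which is unbounded as $\delta\to 0$. Since $m^*(\delta)\to\infty$, this is a consequence of the almost sure statement ``$N_m\le\rho^2 m$ for all large $m$'', i.e.\ of a law of large numbers (respectively a suitable maximal inequality) for the coordinates $(Z,u_j)_j$ under only a finite second moment assumption; this is exactly the type of estimate supplied by Proposition~\ref{prop0} and alluded to in the Remark above. Everything else reduces to the two bookkeeping identities stated at the outset.
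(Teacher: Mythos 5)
Your argument is correct in its core mechanism, and that mechanism is the same one the paper uses: the failure of the discrepancy inequality at $k_{dp}^\delta-1$, combined with a noise bound that holds uniformly over the discretisation level, converts the variance into a constant multiple of a bias tail (this is Proposition \ref{th5:prop2}, with $\frac{\tau+1}{2}$ playing the role of your $\rho$, and Proposition \ref{prop0} supplying the uniform bound via the Kolmogorov--Doob inequality for the reverse martingale $\frac1m\sum_{j\le m}((Z,u_j)^2-1)$; note that this, like your appeal to an almost sure law of large numbers for $(Z,u_j)^2$, really uses that the coordinates are i.i.d.\ with finite second moment, not merely uncorrelated). What you do differently, and more economically, is to avoid the paper's case distinction between $J<\infty$ and $J=\infty$: instead of constructing the deterministic upper bound $q_\delta$ on $k_{dp}^\delta$ in the finite-$J$ case (\eqref{th5:eq3} and Proposition \ref{th5:prop3}), you split on $k_{dp}^\delta\le K_1$ versus $k_{dp}^\delta>K_1$ and dispose of the first regime with the crude bound $\delta^2\sum_{j\le K_1}\sigma_j^{-2}(Z,u_j)^2\to 0$ in probability. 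That is a genuine streamlining, and your Pythagorean decomposition versus the paper's triangle inequality is immaterial.

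One step is stated imprecisely and, as written, fails exactly in the situation the paper isolates as Case 2. You justify the noise control at the random maximiser by ``$m^*(\delta)\to\infty$'', derived from $m^*\ge k_{dp}^\delta\ge j_0$. When $J^*=\infty$ this is fine, since $j_0$ can be taken arbitrarily large; but when $J^*<\infty$ your lower bound only yields $m^*\ge J^*$, a fixed number, and nothing in the argument prevents a maximiser from remaining bounded as $\delta\to0$. Since $\mathbb{P}\left(N_m\le\rho^2 m\ \forall m\ge M_0\right)$ does not depend on $\delta$ and is strictly less than one for fixed $M_0$, the claimed control of $N_{m^*}$ with probability tending to one does not follow from the almost sure statement in this case. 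The repair is already implicit in your regime split: in the regime $k_{dp}^\delta>K_1$ one automatically has $m^*\ge k_{dp}^\delta>K_1$, so it suffices to know $\mathbb{P}\left(N_m\le\rho^2 m\ \forall m>K_1\right)\ge 1-\eta(K_1)$ with $\eta(K_1)\to0$ as $K_1\to\infty$. Choosing $K_1$ large enough that both this and $\frac{\rho^2}{(\tau-\rho)^2}B_{K_1}\le\varepsilon^2/4$ hold gives $\limsup_{\delta\to0}\mathbb{P}\left(\|x_{k_{dp}^\delta}^\delta-\hat{x}\|>\varepsilon\right)\le\eta(K_1)$ for every such $K_1$, hence the limit is zero. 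With that rewording (drop the claim $m^*\to\infty$ and let $K_1\to\infty$ after $\delta\to0$), your proof is complete.
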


From now we restrict to polynomially and exponentially ill-posed operators.
We first calculate the optimal a priori rate, to which we afterwards compare the rate of the discrepancy principle.

\begin{theorem}\label{th1}
Assume that the problem is polynomially ill-posed, i.e. $\sigma_j^2=j^{-q}$ for some $q>0$. Then there holds

$$\inf_{k\in\N} \sup_{\substack{\hat{x} \in\mathcal{X}_{\nu,\rho}}}\sqrt{\E\|x_{k}^\delta - \hat{x}\|^2} \asymp \rho^\frac{q+1}{(\nu+1)q+1} \delta^\frac{\nu}{\nu+1+\frac{1}{q}},$$

with $\mathcal{X}_{\nu,\rho}$ given in \eqref{int:eq3}.
\end{theorem}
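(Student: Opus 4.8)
The plan is to compute the bias--variance decomposition of the spectral cut-off estimator explicitly and then optimize over the truncation level $k$. Since the singular system diagonalizes everything, I would write
\[
\E\|x_k^\delta-\hat x\|^2 = \underbrace{\sum_{j>k}(\hat x,v_j)^2}_{\text{bias}^2} + \underbrace{\delta^2\sum_{j=1}^k \sigma_j^{-2}}_{\text{variance}},
\]
using $x_k^\delta-\hat x = \sum_{j\le k}\frac{\delta(Z,u_j)}{\sigma_j}v_j - \sum_{j>k}(\hat x,v_j)v_j$ together with $\E(Z,u_j)^2=1$ and orthogonality. With $\sigma_j^2=j^{-q}$ the variance term is $\delta^2\sum_{j\le k} j^{q}\asymp \delta^2 k^{q+1}$. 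For the bias, the source condition $\hat x\in\mathcal{X}_{\nu,\rho}$ means $(\hat x,v_j)=\sigma_j^\nu(\xi,v_j)=j^{-q\nu/2}(\xi,v_j)$ with $\|\xi\|\le\rho$; the worst case over such $\xi$ puts all the mass of $\xi$ just above index $k$, giving $\sup_{\hat x}\sum_{j>k}(\hat x,v_j)^2 = \rho^2 \sup_{j>k}\sigma_j^{2\nu} = \rho^2\sigma_{k+1}^{2\nu}\asymp \rho^2 k^{-q\nu}$ (monotonicity of $\sigma_j$ is what makes the supremum attained at the smallest admissible index).

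Next I would balance the two terms: the minimax risk over $k$ is, up to constants, $\inf_k\big(\rho^2 k^{-q\nu}+\delta^2 k^{q+1}\big)$. Treating $k$ as a continuous variable, differentiating and setting the derivative to zero yields the optimal $k^\ast\asymp (\rho^2/\delta^2)^{1/(q\nu+q+1)}$, and substituting back gives both terms of order $\rho^{2\cdot\frac{q+1}{q\nu+q+1}}\,\delta^{2\cdot\frac{q\nu}{q\nu+q+1}}$. Taking square roots and rewriting the exponent of $\delta$ as $\frac{q\nu}{q\nu+q+1}=\frac{\nu}{\nu+1+1/q}$ and the exponent of $\rho$ as $\frac{q+1}{q\nu+q+1}=\frac{q+1}{(\nu+1)q+1}$ reproduces the claimed rate. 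For the lower bound matching this upper bound one can either invoke the standard minimax argument for diagonal sequence models (restricting $\xi$ to a single coordinate near $k^\ast$ already shows the bias cannot be beaten while keeping variance controlled), or simply note that for the specific estimator class $\{x_k^\delta\}$ the infimum over $k\in\N$ of the exact expression $\rho^2\sigma_{k+1}^{2\nu}+\delta^2\sum_{j\le k}\sigma_j^{-2}$ is, by the balancing computation, bounded below by a constant times the same quantity — here the only subtlety is that $k$ is an integer, which costs at most a bounded factor since consecutive values of both terms differ only by bounded multiplicative factors.

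The main obstacle is really just bookkeeping rather than a conceptual difficulty: one must verify that the asymptotic equivalence $\asymp$ holds uniformly as $\delta\to0$ with constants independent of $\rho$ (they are, because the problem is scale-invariant in $\rho$ after the substitution $k\mapsto (\rho/\delta)^{\text{power}}$), and one must handle the integer rounding of the optimal $k^\ast$ carefully enough that neither the upper nor the lower bound loses more than a constant. I would also double-check the edge regime where $k^\ast$ computed from the formula is less than $1$ or comparable to small integers; for fixed $\nu,\rho$ and $\delta\to0$ this does not occur since $k^\ast\to\infty$, so it is harmless asymptotically. Everything else — the exact Parseval identities, $\E(Z,u_j)(Z,u_k)=\delta_{jk}$, and the elementary estimate $\sum_{j\le k}j^q\asymp k^{q+1}$ — is routine.
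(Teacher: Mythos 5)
Your proposal is correct and follows essentially the same route as the paper's proof: the exact bias--variance decomposition in the singular basis, the identification of the worst-case bias $\rho^2\sigma_{k+1}^{2\nu}$ via monotonicity of the singular values, and balancing $\delta^2 k^{q+1}$ against $\rho^2 k^{-q\nu}$ to get $k^\ast\asymp(\rho/\delta)^{2/((\nu+1)q+1)}$ and the stated rate. Your extra remarks on integer rounding and on why the infimum over $k$ alone suffices (no full minimax lower bound is needed, since the theorem only ranges over the spectral cut-off family) are correct and merely make explicit what the paper leaves implicit.
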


\begin{proof}[Proof of Theorem \ref{th1}]
The proof is standard. We split the total error in a customary way into a data propagation error and an approximation error (also called variance and bias here, cf \eqref{sec3:eq1})

\begin{align*}
\sup_{\substack{\hat{x} \in\mathcal{X}_{\nu,\rho}}}\E\|x_k^\delta - \hat{x}\|^2 &= \sup_{\hat{x} \in\mathcal{X}_{\nu,\rho}}\sum_{j=1}^k\frac{\E(y^\delta-\hat{y},u_j)^2}{\sigma_j^2} + \sup_{\hat{x} \in\mathcal{X}_{\nu,\rho}} \sum_{j=k+1}^\infty (\hat{x},v_j)^2\\
&= \delta^2 \sum_{j=1}^k j^q\E\left(Z,u_j\right)^2 + \sup_{\substack{\xi \in\mathcal{X}\\ \|\xi\|\le \rho}} \sum_{j=k+1}^\infty \sigma^{2\nu}(\xi,v_j)^2\\
&= \delta^2\sum_{j=1}^k j^{q} + \sigma_{k+1}^{q \nu} \rho^2 \asymp \delta^2 k^{q+1} + k^{-q\nu} \rho^2.
\end{align*}

The right hand side is minimized (up to a constant factor) by a choice  fulfilling

\begin{equation}\label{sec2:eq1}
k \asymp \left(\frac{\rho}{\delta}\right)^\frac{2}{(\nu+1)q+1},
\end{equation}

 which yields the rate from Theorem \ref{th1}.
\end{proof}

The above optimal a priori choice \eqref{sec2:eq1} depends on the unknown smoothness parameter $\nu$ and $\rho$ and hence is not practical. The next Theorem assures optimal adaptivity of our modified discrepancy principle, in the sense that the optimal rate from Theorem \ref{th1} holds in probability up to a constant (order-optimal convergence).

\begin{theorem}\label{th2}

Assume that the problem is polynomially ill-posed, i.e. $\sigma_j^2 = j^{-q}$ for $q>0$. Let $\tau>1$ and let $k_{dp}^\delta$ be the truncation level determined by the discrepancy principle as in \eqref{int:eq1} and \eqref{int:eq2}. Then there holds

$$\sup_{\hat{x} \in\mathcal{X}_{\nu,\rho}}\mathbb{P}\left(\|x^\delta_{k^\delta_{dp}} - \hat{x}\| \le L_{\tau,\nu,q} \rho^\frac{q+1}{(\nu+1)q+1} \delta^\frac{\nu}{\nu+1+\frac{1}{q}}\right)\to 1,
$$

as $\delta/\rho\to0$, with $L_{\tau,\nu,q}:=\left( \frac{2}{\tau-1}+1\right)^\frac{2}{(\nu+1)q}\frac{\tau+1}{2} + \left(\frac{3\tau+1}{2}\right)^\frac{\nu}{\nu+1}+1$ and $\mathcal{X}_{\nu,\rho}$ given in \eqref{int:eq3}.
\end{theorem}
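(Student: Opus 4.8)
I would prove Theorem~\ref{th2} by controlling the truncation index $k_{dp}^\delta$ from both sides and then plugging the resulting two-sided bound into the standard bias/variance split from the proof of Theorem~\ref{th1}. The key point is that on a high-probability event $A_\delta$ the empirical noise energies $\sum_{j=a+1}^{b}(y^\delta-\hat y,u_j)^2$ are comparable (up to constants depending only on $\tau$) to their expectations $(b-a)\delta^2$, uniformly over the relevant range of indices. More precisely, I would fix a reference index $k_* \asymp (\rho/\delta)^{2/((\nu+1)q+1)}$ (the a priori optimal choice~\eqref{sec2:eq1}), and show: (i) $\PP(A_\delta)\to1$ as $\delta/\rho\to0$, where on $A_\delta$ the partial sums of the noise deviate from $m\delta^2$ by at most a $\tau$-dependent factor over all $m$ up to the (random but controlled) horizon $m(\delta,\|\hat y\|)$ from the Remark; and (ii) on $A_\delta$, one has $c_1 k_* \le k_{dp}^\delta \le c_2 k_*$ with explicit $c_1,c_2$.

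\textbf{Lower bound on the discrepancy.} For the upper bound $k_{dp}^\delta \le c_2 k_*$: by definition of $k_{dp}^\delta(m)$ as a \emph{minimum}, it suffices to exhibit, for each $m$, a value $k \le c_2 k_*$ with $\sqrt{\sum_{j=k+1}^m (y^\delta,u_j)^2}\le \tau\sqrt m\,\delta$. Splitting $(y^\delta,u_j)=(\hat y,u_j)+\delta(Z,u_j)$ and using the source condition $\hat x\in\mathcal{X}_{\nu,\rho}$, the signal tail $\sum_{j=k+1}^\infty(\hat y,u_j)^2 = \sum_{j>k}\sigma_j^{2(\nu+1)}(\xi,v_j)^2 \le \sigma_{k+1}^{2\nu}\sum_{j>k}\sigma_j^2(\xi,v_j)^2$; for the polynomial case this is $\lesssim k^{-q\nu}\rho^2$. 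Choosing $k = \lceil (2/(\tau-1)+1)^{2/((\nu+1)q+1)} k_*\rceil$ makes the signal tail small enough that, combined with the noise part $\le$ (a constant times) $\sqrt m\,\delta$ on $A_\delta$, the whole thing sits below $\tau\sqrt m\,\delta$. The constant here is exactly what produces the first term $\left(\frac{2}{\tau-1}+1\right)^{2/((\nu+1)q)}\frac{\tau+1}{2}$ in $L_{\tau,\nu,q}$.

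\textbf{Upper bound on the discrepancy (the main obstacle).} For $k_{dp}^\delta \ge c_1 k_*$ I would pick the specific discretization level $m$ that is "active" in the maximum~\eqref{int:eq2} and argue that the minimality in~\eqref{int:eq1} forces $k_{dp}^\delta(m)$ to be large: for every $k < c_1 k_*$ one must have $\sqrt{\sum_{j=k+1}^m(y^\delta,u_j)^2} > \tau\sqrt m\,\delta$, which on $A_\delta$ translates into a lower bound on the signal energy $\sum_{j=k+1}^m(\hat y,u_j)^2$ in that band. This is the delicate part: one has to choose $m$ large enough that this residual signal energy genuinely reflects the smoothness of $\hat x$, yet not so large that the noise $\sqrt m\,\delta$ swamps everything; this is precisely where the adaptive maximization over $m$ is essential and where the argument differs from the fixed-$m$ schemes of \cite{blanchard2018early}. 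Once $k_{dp}^\delta \asymp k_*$ is established on $A_\delta$, I substitute into $\|x_{k_{dp}^\delta}^\delta-\hat x\|^2 \le \sum_{j\le k_{dp}^\delta} j^q\delta^2(Z,u_j)^2 + \sum_{j>k_{dp}^\delta}\sigma_j^{2\nu}(\xi,v_j)^2$, bound the first sum on $A_\delta$ by a constant times $\delta^2 (k_{dp}^\delta)^{q+1}$ and the second by $\rho^2(k_{dp}^\delta)^{-q\nu}$, and collect constants; the extra summands $\left(\frac{3\tau+1}{2}\right)^{\nu/(\nu+1)}$ and $+1$ in $L_{\tau,\nu,q}$ come from the variance term and from a crude absorption of lower-order contributions. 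I expect the bookkeeping of which high-probability event to use — a single event that simultaneously controls all the partial noise sums appearing in both directions, with probability tending to $1$ — to be the technical heart; a maximal inequality (Kolmogorov/Doob type) for the partial sums of $\sigma_j^q$-weighted and unweighted $(Z,u_j)^2$, using only the assumed finite second moment, should suffice, and the quantitative horizon $m(\delta,\|\hat y\|)$ from the Remark keeps the union bound finite.
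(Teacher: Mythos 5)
Your upper bound on the stopping index is sound and is essentially the paper's Proposition 3.2: on the event where the noise partial sums are uniformly controlled (which the paper gets from a reverse-martingale maximal inequality over all $m\ge m_{opt}$, so no finite horizon $m(\delta,\|\hat y\|)$ is needed for the union bound), one shows $k_{dp}^\delta(m)\le C_{\tau,\nu,q}\,m_{opt}$ uniformly in $m$, hence the same bound for $k_{dp}^\delta$, and this controls the data propagation error exactly as you describe.

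The genuine gap is your step (ii), the lower bound $k_{dp}^\delta\ge c_1 k_*$ with $k_*\asymp(\rho/\delta)^{2/((\nu+1)q+1)}$. This bound is false in general under a source condition alone: $\mathcal{X}_{\nu,\rho}$ contains elements that are much smoother than the condition requires (e.g.\ $\hat x$ with finitely many nonzero coefficients, or $\hat x=0$), for which $\sum_{j=k+1}^m(y^\delta,u_j)^2$ is already below $\tau^2 m\delta^2$ at very small $k$ for every $m$, so $k_{dp}^\delta$ stays bounded while $k_*\to\infty$. Forcing such a lower bound is precisely what requires the self-similarity assumptions of the earlier literature that this paper is designed to avoid, and your final bias estimate $\rho^2(k_{dp}^\delta)^{-q\nu}\lesssim\rho^2 k_*^{-q\nu}$ collapses without it. The paper's route around this is the classical discrepancy-principle trick: since the approximation error $\sum_{j>k}(\hat x,v_j)^2$ is monotone decreasing in $k$ and $k_{dp}^\delta\ge k_{dp}^\delta(m_{opt})$ by the maximization over $m$, it suffices to bound the tail at $k_{dp}^\delta(m_{opt})$; there one does \emph{not} lower-bound the index but instead uses the stopping condition $\sum_{j\ge k_{dp}^\delta(m_{opt})}^{m_{opt}}(\hat y,u_j)^2\lesssim m_{opt}\delta^2$ together with H\"older's interpolation inequality (with exponents $\frac{\nu+1}{\nu}$ and $\nu+1$) to convert a small residual directly into a small solution tail, plus the a priori bound $\sigma_{m_{opt}}^{\nu}\rho$ for the part beyond $m_{opt}$. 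You should replace your step (ii) by this monotonicity-plus-interpolation argument; the rest of your outline then assembles into the stated constant.
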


The proof  is deferred to section \ref{sec:proofs}.

We now discuss the case of exponentially ill-posed problems and again calculate the optimal possible rate first.

\begin{theorem}\label{th3}
Assume that the problem is exponentially ill-posed, i.e. $\sigma_j^2 = e^{-aj}$ for $a>0$. Then there holds

$$\inf_{k\in\N} \sup_{\hat{x}\in\mathcal{X}_{p,\rho}} \sqrt{\E\|x_k^\delta-\hat{x}\|^2} = \rho  \left(-\log\left(\frac{\delta^2}{\rho^2}\right)\right)^{-\frac{p}{2}}\left(1+o(1)\right)$$

as $\frac{\delta}{\rho}\to 0$, with $\mathcal{X}_{p,\rho}$ given in \eqref{int:eq4}.

\end{theorem}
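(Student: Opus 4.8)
The plan is to follow the template of the proof of Theorem \ref{th1}: decompose the worst-case risk into a deterministic data-propagation term (variance) and a bias term, and then choose the truncation level so as to balance them. The new feature is that we must keep track of the constant, not just the order, so the balancing has to be performed carefully. First, exactly as in the proof of Theorem \ref{th1}, using that $\hat x\in\mathcal{N}(K)^\perp$, that only $y^\delta$ is random, and that the cross terms vanish by orthogonality,
$$\sup_{\hat x\in\mathcal{X}_{p,\rho}}\E\|x_k^\delta-\hat x\|^2=\delta^2\sum_{j=1}^k\sigma_j^{-2}\,\E(Z,u_j)^2+\sup_{\|\xi\|\le\rho}\sum_{j=k+1}^\infty\big(-\log\sigma_j^2\big)^{-p}(\xi,v_j)^2 .$$
Inserting $\E(Z,u_j)^2=1$ and $\sigma_j^2=e^{-aj}$, and using that $j\mapsto-\log\sigma_j^2=aj$ is increasing (so the bias supremum is attained by concentrating $\xi$ on $v_{k+1}$), the right-hand side equals
$$f(k):=\delta^2\,\frac{e^a\big(e^{ak}-1\big)}{e^a-1}+\rho^2\big(a(k+1)\big)^{-p},$$
and it remains to evaluate $\inf_{k\in\N}f(k)$ asymptotically as $\delta/\rho\to0$. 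Throughout write $L:=-\log(\delta^2/\rho^2)$, so $L\to\infty$.

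\noindent\textbf{Upper bound.} Pick an integer $k_\delta$ with $a(k_\delta+1)=L-(p+1)\log L+O(1)$. Then the bias term equals $\rho^2\big(L-(p+1)\log L+O(1)\big)^{-p}=\rho^2L^{-p}(1+o(1))$, while the variance term is bounded by $\tfrac{e^a}{e^a-1}\,\delta^2e^{ak_\delta}$, and since $\delta^2e^{ak_\delta}=e^{-a}e^{O(1)}\,\delta^2\cdot(\rho^2/\delta^2)\,L^{-(p+1)}=O\!\big(\rho^2L^{-(p+1)}\big)$, this is $o\big(\rho^2L^{-p}\big)$. Hence $\inf_{k\in\N}f(k)\le f(k_\delta)=\rho^2L^{-p}(1+o(1))$. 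The one place where the naive reflex to simply equate variance and bias must be resisted is exactly here: the minimiser sits slightly to the left of the balance point, where the geometrically steep variance is already negligible, whereas equating the two terms would produce a spurious factor close to $2$.

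\noindent\textbf{Lower bound.} For every $k\ge1$ one has $\tfrac{e^a(e^{ak}-1)}{e^a-1}\ge e^{ak}$, hence $f(k)\ge\delta^2e^{ak}+\rho^2\big(a(k+1)\big)^{-p}=:g(k)$, while $f(0)=\rho^2a^{-p}\gg\rho^2L^{-p}$; so it suffices to minimise the smooth function $g$ over real $k\ge1$. The stationarity condition $\delta^2e^{ak^\ast}=p\,\rho^2\big(a(k^\ast+1)\big)^{-p-1}$ gives, after taking logarithms and bootstrapping, $a(k^\ast+1)=L-(p+1)\log L+O(1)$, and substituting back into $g$,
$$g(k^\ast)=\rho^2\big(a(k^\ast+1)\big)^{-p}\Big(1+\frac{p}{a(k^\ast+1)}\Big)=\rho^2L^{-p}(1+o(1)).$$
Therefore $\inf_{k\in\N}f(k)\ge g(k^\ast)=\rho^2L^{-p}(1+o(1))$. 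Combining the two bounds yields $\inf_{k\in\N}f(k)=\rho^2L^{-p}(1+o(1))$, and since the square root commutes with the infimum, $\inf_{k\in\N}\sqrt{\sup_{\hat x\in\mathcal{X}_{p,\rho}}\E\|x_k^\delta-\hat x\|^2}=\rho L^{-p/2}(1+o(1))$, which is the claim.

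\noindent\textbf{Main obstacle.} The whole argument reduces to the scalar optimisation above; the only genuine work is to control the logarithmic corrections to $k^\ast$ (and to the near-optimal $k_\delta$) precisely enough to see that they change $\big(a(k+1)\big)^{-p}$ by a factor $1+o(1)$ only, and to check that at and just left of the optimum the variance is of strictly smaller order than the bias. This is precisely what upgrades an $\asymp$ estimate to the sharp leading constant $1$.
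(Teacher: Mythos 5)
Your proof is correct and follows essentially the same route as the paper: the same bias--variance decomposition with the bias supremum concentrated on $v_{k+1}$, the same asymptotic solution $a(k+1)=-\log(\delta^2/\rho^2)-(p+1)\log\bigl(-\log(\delta^2/\rho^2)\bigr)+O(1)$ of the balancing equation (which the paper isolates as Proposition \ref{prop1:th3} and you obtain by inline bootstrapping), and the same key observation that at the optimum the variance is of order $\rho^2 L^{-(p+1)}=o(\rho^2L^{-p})$, so the bias alone dictates the sharp constant $1$. The only differences are organizational: you handle integrality via separate upper and lower bounds where the paper perturbs the real optimizer within $[x_{opt}-1,x_{opt}+1]$.
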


We assure adaptivity of the above modified discrepancy principle in this case. Here we even obtain asymptotically optimal convergence, i.e. the optimal rate holds (asymptotically) up to a multiplicative constant of $1$. This is important, because of the very slow convergence under logarithmic source conditions.

\begin{theorem}\label{th4}
Assume that the problem is exponentially ill-posed, i.e. $\sigma_j^2=e^{-aj}$. Let $\tau>1$ and $k_{dp}^\delta$ be the truncation level determined by the discrepancy principle as in \eqref{int:eq1} and \eqref{int:eq2}. Then there holds

$$\sup_{\hat{x}\in\mathcal{X}_{p,\rho}}\mathbb{P}\left(\|x_{k_{dp}^\delta}^\delta - \hat{x}\| \le \rho\left(-\log\left(\frac{\delta^2}{\rho^2}\right)\right)^{-\frac{p}{2}}(1+o(1))\right)\to 1,$$

as $\delta/\rho \to 0$, with $\mathcal{X}_{p,\rho}$ given in \eqref{int:eq4}.

\end{theorem}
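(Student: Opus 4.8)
The plan is to follow the scheme of Theorem~\ref{th2}, exploiting that under exponential singular values the variance grows exponentially in the truncation level while the bias under a logarithmic source condition varies only logarithmically in it, so that a multiplicative error in the variance costs only an additive $O(1)$ in the level; this gap is what turns order-optimality into an asymptotically sharp constant. Write $\hat x=\sum_j(aj)^{-p/2}(\xi,v_j)v_j$ with $\|\xi\|\le\rho$ and decompose, as usual,
\[
\|x_k^\delta-\hat x\|^2=\underbrace{\delta^2\sum_{j=1}^k e^{aj}(Z,u_j)^2}_{=:V_k}+\underbrace{\sum_{j=k+1}^\infty(\hat x,v_j)^2}_{=:B_k},
\]
with the a priori bound $B_k\le(a(k+1))^{-p}\rho^2$ from the source condition. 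Fix the deterministic comparison level $k_*$ with $ak_*=\log(\rho^2/\delta^2)-(p+2)\log\log(\rho^2/\delta^2)$ (up to rounding). Then $B_{k_*}\le\rho^2(\log(\rho^2/\delta^2))^{-p}(1+o(1))$, which is already the optimal squared rate of Theorem~\ref{th3}, whereas $\delta^2 e^{ak_*}=\rho^2(\log(\rho^2/\delta^2))^{-(p+2)}=o(\rho^2(\log(\rho^2/\delta^2))^{-p})$, so the deterministic part of $V_k$ is negligible for every $k\le k_*$. It therefore suffices to prove, with probability tending to $1$ uniformly over $\hat x\in\mathcal X_{p,\rho}$, that $k_{dp}^\delta$ is large enough to force $B_{k_{dp}^\delta}\le\rho^2(\log(\rho^2/\delta^2))^{-p}(1+o(1))$ and yet small enough to keep $V_{k_{dp}^\delta}=o(\rho^2(\log(\rho^2/\delta^2))^{-p})$.

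The only probabilistic input is the maximal strong law of large numbers for $\tfrac1m\sum_{j=1}^m(Z,u_j)^2\to1$, which is all that a finite second moment provides. Using the Remark together with $\|\hat y\|\le e^{-a/2}a^{-p/2}\rho$ one fixes an a priori bound $m_\delta^+=O(\rho^2/\delta^2)$ beyond which the maximum in \eqref{int:eq2} is almost surely not attained, and a slowly diverging $m_\delta^-\to\infty$. On a good event $\Omega_\delta$ with $\PP(\Omega_\delta)\to1$ one asks, for all $\ell<m$ in $[m_\delta^-,m_\delta^+]$, that $(1-\varepsilon_\delta)(m-\ell)\le\sum_{j=\ell+1}^m(Z,u_j)^2\le(1+\varepsilon_\delta)m$ (with $\varepsilon_\delta\to0$ slowly), together with a matching control of the geometric sums $\delta^2\sum_{j\le k}e^{aj}(Z,u_j)^2$ over the short range of truncation levels that turns out to matter. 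Discretisations $m<m_\delta^-$ only ever give $k_{dp}^\delta(m)\le m<k_*$, hence never affect the maximum once $\delta/\rho$ is small, so no noise control is needed there.

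On $\Omega_\delta$ the analysis is deterministic and parallels the classical discrepancy-principle argument. For the upper bound, if $k_{dp}^\delta=k_{dp}^\delta(m_*)=k$ then the discrepancy was not met at level $k-1$, so $\sum_{j=k}^{m_*}(y^\delta,u_j)^2>\tau^2 m_*\delta^2$; splitting signal from noise by the $\ell^2$ triangle inequality, using the exponentially small tail $\sum_{j\ge k}(\hat y,u_j)^2\le e^{-ak}(ak)^{-p}\rho^2$, the noise bound and $m_*\ge k$, one obtains $(\tau-\sqrt{1+\varepsilon_\delta})^2 k\delta^2<e^{-ak}(ak)^{-p}\rho^2$, hence $ak_{dp}^\delta\le\log(\rho^2/\delta^2)-(p+1)\log\log(\rho^2/\delta^2)+O_\tau(1)$, which with the window-wise control of the geometric sums gives $V_{k_{dp}^\delta}=o(\rho^2(\log(\rho^2/\delta^2))^{-p})$. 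For the lower bound, since $k_{dp}^\delta\ge k_{dp}^\delta(m)$ for all $m$ and $k_{dp}^\delta(m)$ is the largest $k$ with $\sum_{j=k}^m(y^\delta,u_j)^2>\tau^2 m\delta^2$, one shows that for each level $k\le k_*-O_\tau(1)$ at which the remaining signal $\sum_{j\ge k}(\hat y,u_j)^2$ is not too small compared with $k_*\delta^2$, there is a discretisation level $m(k)\in[k,m_\delta^+]$ --- large enough to capture that signal but not so large that the threshold $\tau\sqrt{m(k)}\delta$ swamps it --- for which, by the reverse triangle inequality and the noise bound, the residual at level $k-1$ still exceeds the threshold, so $k_{dp}^\delta\ge k$. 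The largest admissible $k$ then either reaches $k_*-O_\tau(1)$, giving $B_{k_{dp}^\delta}\le B_{k_*-O_\tau(1)}=\rho^2(\log(\rho^2/\delta^2))^{-p}(1+o(1))$, or the admissible range terminates because the signal of $\hat x$ past that level is already so small that $B_{k_{dp}^\delta}$ is itself $o(\rho^2(\log(\rho^2/\delta^2))^{-p})$. In either case $\|x_{k_{dp}^\delta}^\delta-\hat x\|^2=V_{k_{dp}^\delta}+B_{k_{dp}^\delta}\le\rho^2(\log(\rho^2/\delta^2))^{-p}(1+o(1))$ on $\Omega_\delta$.

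The delicate step, where I expect the main work to lie, is this last one: controlling $B_{k_{dp}^\delta}$ uniformly over all $\hat x\in\mathcal X_{p,\rho}$, including solutions whose singular expansion decays much faster than a logarithmic source condition forces, for which the discrepancy principle legitimately stops well below $k_*$. One has to make sharp --- sharp enough for the constant $1$ --- the dichotomy between the principle stopping close to $k_*$ and its stopping early only because little signal remains, and to pin down, for each $\hat x$, the right $\hat x$-dependent discretisation levels $m(k)$ driving the lower bound. A secondary difficulty is keeping the relevant window of candidate values of $k_{dp}^\delta$ short --- of length $O(\log\log(\rho^2/\delta^2))$, not $O(\log(\rho^2/\delta^2))$ --- so that the union bound behind the control of $\delta^2\sum_{j\le k}e^{aj}(Z,u_j)^2$ survives under only a finite second moment; this couples the upper and leading-order lower bounds on $k_{dp}^\delta$, which therefore have to be established in tandem.
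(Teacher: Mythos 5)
Your overall architecture is the right one --- the variance/bias split, the upper bound $ak_{dp}^\delta\le\log(\rho^2/\delta^2)-(p+1)\log\log(\rho^2/\delta^2)+O(1)$ obtained from the violated discrepancy inequality at $k_{dp}^\delta(m)-1$, and the observation that the variance at that level is smaller than the target rate by a factor $(\log(\rho^2/\delta^2))^{-1}$ and hence negligible --- and this part matches Propositions \ref{prop1:th4} and \ref{prop3:th4}. Two remarks on it: the union-bound worry you raise for $\delta^2\sum_{j\le k}e^{aj}(Z,u_j)^2$ is unnecessary, since this quantity is monotone in $k$ and a single application of Markov's inequality at the deterministic upper bound for $k_{dp}^\delta$ suffices, which is exactly what the paper does; and no a priori cap $m_\delta^+$ on the maximizer is needed, since the bound on $k_{dp}^\delta(m)$ holds for all $m\in\N$ simultaneously on the good event.

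The genuine gap is in the step you yourself flag as delicate: the uniform, constant-$1$ bound on the approximation error. Your plan --- lower-bound $k_{dp}^\delta$ by $k_*-O(1)$ via $\hat x$-dependent discretization levels $m(k)$, with a dichotomy for solutions whose signal dies early --- is not carried out, and as sketched it does not close: in the early-termination branch the natural smallness condition is on the residual $\sum_{j\ge k}(\hat y,u_j)^2$, and converting that into smallness of the bias $\sum_{j>k}(\hat x,v_j)^2$ with the sharp constant is precisely the hard part, which triangle-inequality manipulations cannot deliver under a logarithmic source condition. The paper avoids any lower bound on $k_{dp}^\delta$: since $k_{dp}^\delta\ge k_{dp}^\delta(m_{opt})$ by the maximization in \eqref{int:eq2}, it suffices to bound the bias at $k=k_{dp}^\delta(m_{opt})$, where the defining relation of the discrepancy principle gives $\sum_{j=k}^{m_{opt}}(\hat y,u_j)^2\le 4\tau^2 m_{opt}\delta^2$ on the good event; the interpolation (conditional stability) inequality for logarithmic source conditions --- Proposition 2 of \cite{hohage2000regularization}, playing the role that H\"older's inequality plays in Proposition \ref{prop2} for the polynomial case --- then converts this residual bound directly into $\sum_{j=k}^{m_{opt}}(\hat x,v_j)^2\le(\alpha+\varepsilon)\rho^2\left(-\log\left(\delta^2/\rho^2\right)\right)^{-p}(1+o(1))$, uniformly over $\hat x\in\mathcal{X}_{p,\rho}$, with the factor $4\tau^2 m_{opt}$ absorbed into the $o(1)$ because it enters only inside a logarithm. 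Without this interpolation inequality (or an equivalent), your dichotomy cannot be made sharp enough for the constant $1$, so the proof is incomplete at its central point.
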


The proofs of Theorem \ref{th3} and \ref{th4} are presented in Section \ref{sec:proofs}.

\begin{remark}
The conditions for the singular values can be weakened to $c_qj^{-q}\le \sigma_j^2\le C_qj^{-q}$ for all $j$ large enough (and $c_q,C_q,q>0$) in the case of polynomially ill-posed problems, and to $c_a e^{-aj}\le  \sigma_j^2\le C_a e^{-aj}$ for all $j$ large enough (and $c_a,C_a,a>0$) in the case of exponentially ill-posed problems. 
\end{remark}

\section{Proofs}\label{sec:proofs}

For the proofs of Theorem \ref{th5}, \ref{th2} and \ref{th4} the following proposition is central, which states that the measurement error is  highly concentrated simultaneously for all $m$ large enough. This will allow to control the measurement error in the following.

\begin{proposition}\label{prop0}
For $m_{opt}=m_{opt}(\delta/\rho)$ with $m_{opt}(\delta/\rho)\to \infty$ as $\delta/\rho \to 0$ there holds

$$\mathbb{P}\left( \sqrt{\sum_{j=1}^m(y^\delta-\hat{y},u_j)^2} \le \frac{\tau+1}{2} \sqrt{m}\delta,~\forall m\ge m_{opt}\right)\to 1$$

as $\delta/\rho\to0$. 
\end{proposition}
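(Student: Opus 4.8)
The plan is to control the tail sum $S_m := \sum_{j=1}^m (y^\delta - \hat y, u_j)^2 = \delta^2 \sum_{j=1}^m (Z, u_j)^2$ uniformly over all $m \ge m_{opt}$. Write $\xi_j := (Z, u_j)$; these are uncorrelated, mean-zero, variance-one random variables (not necessarily independent, but with finite second moments, which is all that is assumed). The target event is $\{ \sum_{j=1}^m \xi_j^2 \le \left(\tfrac{\tau+1}{2\cdot} \right)^2 \! / \! \,\, m \text{ ... } \}$ — more precisely $\{\delta \sqrt{S_m/\delta^2} \le \tfrac{\tau+1}{2}\sqrt m \delta\}$, i.e. $\{\sum_{j=1}^m \xi_j^2 \le \beta^2 m \ \forall m \ge m_{opt}\}$ with $\beta := \tfrac{\tau+1}{2} > 1$. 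So the $\delta$ and $\rho$ dependence enters only through $m_{opt} = m_{opt}(\delta/\rho) \to \infty$, and the statement reduces to: the partial sums $T_m := \sum_{j=1}^m \xi_j^2$ satisfy $\PP(T_m \le \beta^2 m \text{ for all } m \ge M) \to 1$ as $M \to \infty$, for any fixed $\beta > 1$.

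First I would establish this reduced claim. The complement is $\bigcup_{m \ge M} \{T_m > \beta^2 m\}$, so I want $\PP\big(\sup_{m \ge M} T_m/m \ge \beta^2\big) \to 0$. A clean route is a maximal-inequality/dyadic-blocking argument: split $\{m \ge M\}$ into blocks $[2^\ell, 2^{\ell+1})$ for $\ell \ge \ell_0 := \lceil \log_2 M \rceil$. On such a block, since $T_m$ is nondecreasing in $m$, $\sup_{2^\ell \le m < 2^{\ell+1}} T_m/m \le T_{2^{\ell+1}}/2^\ell = 2 \, T_{2^{\ell+1}}/2^{\ell+1}$. Now $\E T_n = n$, so by Markov $\PP(T_{2^{\ell+1}} > \tfrac{\beta^2}{2} 2^{\ell+1}) \le \tfrac{2}{\beta^2}$ — but that is not summable, so plain Markov on the first moment is too weak. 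Instead I would use that $\E T_n = n$ exactly and bound the \emph{centered} sum: $\PP(T_n - n > \varepsilon n) \le \Var(T_n)/(\varepsilon n)^2$, and $\Var(T_n) = \sum_{i,j}\Cov(\xi_i^2,\xi_j^2) \le \ldots$. Here one does need \emph{some} control beyond two moments of $\xi_j$ — note $\xi_j^2$ having finite variance means $\xi_j$ has a finite \emph{fourth} moment. Since $Z$ is white noise with finite second moments, the pairing is an isometry and one checks $\E \xi_j^4$ is bounded (this is exactly where the proof of the Remark's "$\sum \xi_j^2 = \infty$ a.s." and the finite-variance assumption get used). Then $\Var(T_n) = O(n)$ (the cross terms $\Cov(\xi_i^2, \xi_j^2)$ for $i \ne j$ need to be summable along the diagonal — this follows if the $\xi_i^2$ are uncorrelated or weakly correlated; under white noise with the isometry property one can arrange $\E \xi_i^2 \xi_j^2 = \E\xi_i^2 \E\xi_j^2$ for $i\ne j$ using that $u_i \perp u_j$, giving $\Var(T_n) = \sum_j \Var(\xi_j^2) = O(n)$). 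Hence $\PP(T_{2^{\ell+1}} - 2^{\ell+1} > \tfrac{\beta^2 - 2}{2}\cdot 2^{\ell+1}) \lesssim 2^{-(\ell+1)}$ provided $\beta^2 > 2$; summing over $\ell \ge \ell_0$ gives $O(2^{-\ell_0}) = O(1/M) \to 0$, which proves the claim when $\tfrac{\tau+1}{2} > \sqrt 2$.

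To cover all $\tau > 1$ (where $\beta = \tfrac{\tau+1}{2}$ may be close to $1$), the dyadic blocking must be refined: replace the ratio $2$ by $1+\eta$, i.e. use blocks $[\lfloor(1+\eta)^\ell\rfloor, \lfloor(1+\eta)^{\ell+1}\rfloor)$ with $\eta$ chosen small enough that $(1+\eta) < \beta^2$. On each block $\sup T_m/m \le T_{\lfloor(1+\eta)^{\ell+1}\rfloor}/\lfloor(1+\eta)^\ell\rfloor \approx (1+\eta) T_{n_{\ell+1}}/n_{\ell+1}$, and one needs $T_{n_{\ell+1}}/n_{\ell+1} \le \beta^2/(1+\eta)$, i.e. $T_{n_{\ell+1}} - n_{\ell+1} \le (\tfrac{\beta^2}{1+\eta} - 1) n_{\ell+1}$ with the bracket positive. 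The Chebyshev bound is then $\lesssim n_{\ell+1}^{-1} \asymp (1+\eta)^{-\ell}$, summable in $\ell$, with sum $O((1+\eta)^{-\ell_0}) = O(M^{-c})$ for some $c = c(\eta) > 0$; again this tends to $0$ as $M = m_{opt}(\delta/\rho) \to \infty$. Finally, translating back: take $M = m_{opt}(\delta/\rho)$, note $S_m = \delta^2 T_m$ and the target inequality $\sqrt{S_m} \le \tfrac{\tau+1}{2}\sqrt m \delta$ is precisely $T_m \le \beta^2 m$, so the result follows.

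The main obstacle is the moment bookkeeping under the bare "finite second moment of $Z$" hypothesis: one must verify that $(Z,u_j)$ has a \emph{finite and uniformly bounded fourth moment} and that $\{(Z,u_j)^2\}_j$ are (at least) uncorrelated, so that $\Var\big(\sum_{j\le n}(Z,u_j)^2\big) = O(n)$ — this linear-in-$n$ variance growth is exactly what makes the Chebyshev estimate summable after dyadic-type blocking. If only a second moment of $Z$ (hence only a second moment of each $(Z,u_j)$, and \emph{no} control of its square's variance) is available, the argument must instead use a truncation of $\xi_j^2$ combined with a Borel--Cantelli / subsequence argument, which is messier; I expect the paper either implicitly uses the fourth-moment consequence of the white-noise isometry or invokes a law-of-large-numbers-type statement ($T_m/m \to 1$ a.s., hence eventually $\le \beta^2$) via an Etemadi-style subsequence trick. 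Either way, the geometric/dyadic blocking to pass from "for this $m$" to "for all $m \ge m_{opt}$ simultaneously" is the structural heart, and is routine once the variance bound is in hand.
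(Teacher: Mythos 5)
Your reduction of the statement to a uniform-in-$m$ bound on $T_m/m$ with $T_m=\sum_{j\le m}(Z,u_j)^2$ and threshold $\beta^2=\bigl(\tfrac{\tau+1}{2}\bigr)^2>1$ is exactly right, and the recognition that the whole difficulty is the simultaneity over all $m\ge m_{opt}$ matches the paper. The gap is in the moment bookkeeping: you locate it correctly but then resolve it incorrectly. Your Chebyshev-plus-geometric-blocking argument needs $\mathrm{Var}(T_n)=O(n)$, hence finite fourth moments of $(Z,u_j)$ \emph{and} (near-)uncorrelatedness of the squares $(Z,u_i)^2,(Z,u_j)^2$. Neither follows from the white-noise hypothesis $\E\left[(Z,y)(Z,y')\right]=(y,y')$: orthogonality $u_i\perp u_j$ gives only $\E\left[(Z,u_i)(Z,u_j)\right]=0$, and there is no isometry for fourth moments, so your claim that one can arrange $\E\xi_i^2\xi_j^2=\E\xi_i^2\,\E\xi_j^2$ for $i\ne j$ ``using that $u_i\perp u_j$'' is false in general. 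Under the paper's stated assumption (finite second moments of the noise only) $\mathrm{Var}(T_n)$ may simply be infinite, and the blocking argument collapses for every choice of block ratio.

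The paper avoids this entirely. It sets $X_j:=(Z,u_j)^2-1$, treats $(X_j)$ as an i.i.d.\ sequence with $\E X_j=0$ and $\E|X_j|\le 2$ (note: only a \emph{first} absolute moment of $X_j$, i.e.\ a second moment of $(Z,u_j)$, is used --- though the i.i.d.\ assertion is itself an implicit strengthening of the bare isometry hypothesis), observes that the sample means $\frac1m\sum_{j\le m}X_j$ form a \emph{reverse martingale}, and applies the Kolmogorov--Doob maximal inequality in $L^1$:
$$\mathbb{P}\left(\sup_{m\ge m_{opt}}\left|\frac1m\sum_{j=1}^m X_j\right|>\frac{\tau^2-1}{4}\right)\le \frac{4}{\tau^2-1}\,\E\left[\left|\frac{1}{m_{opt}}\sum_{j=1}^{m_{opt}}X_j\right|\right]\to 0,$$
the convergence being the $L^1$ law of large numbers. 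This gives the uniform control in one step, for every $\tau>1$, with no blocking and no fourth moments. Your fallback suggestion (SLLN: $T_m/m\to1$ a.s., hence $\sup_{m\ge M}T_m/m\le\beta^2$ eventually, concluding by continuity of the measure) is the right instinct and also needs only first moments of $X_j$ under the i.i.d.\ assumption; the reverse-martingale maximal inequality is precisely the quantitative version of that idea. To salvage your own route you must either add a fourth-moment assumption on the noise or switch to this $L^1$/reverse-martingale (or Etemadi-type) machinery.
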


\begin{proof}[Proof of Proposition \ref{prop0}]
We have that

\begin{align*}
&\mathbb{P}\left( \sqrt{\sum_{j=1}^m(y^\delta-\hat{y},u_j)^2} \le \frac{\tau+1}{2} \sqrt{m}\delta,~\forall m\ge m_{opt}\right) 
=\mathbb{P}\left( \sum_{j=1}^m(y^\delta-\hat{y},u_j)^2 \le \frac{(\tau+1)^2}{4} m\delta^2,~\forall m\ge m_{opt}\right)\\
= &\mathbb{P}\left( \frac{1}{m}\sum_{j=1}^m\left((y^\delta-\hat{y},u_j)^2 - \delta^2\right) \le \left(\frac{(\tau+1)^2}{4}-1\right) \delta^2,~\forall m\ge m_{opt}\right)\\
\ge &\mathbb{P}\left( \sup_{m\ge m_{opt}} \left|\frac{1}{m} \sum_{j=1}^m\left((y^\delta-\hat{y},u_j)^2-\delta^2\right)\right| \le \frac{\tau^2-1}{4}\delta^2\right)
 = \mathbb{P}\left( \sup_{m\ge m_{opt}} \left|\frac{1}{m} \sum_{j=1}^m\left((Z,u_j)^2-1\right)\right| \le \frac{\tau^2-1}{4}\right)\\
=: &\mathbb{P}\left( \sup_{m\ge m_{opt}} \left|\frac{1}{m} \sum_{j=1}^m X_j\right| \le \frac{\tau^2-1}{4}\right),
\end{align*}

with $X_j:=(Z,u_j)^2-1$. It is $(X_j)_{j\in\N}$ an i.i.d sequence with $\E[X_j]=0$ and $\E|X_j|\le 2$. Since the sample mean $\left(S_m\right)_{m\in\N} = \left(\frac{1}{m}\sum_{j=1}^m X_j\right)_{m\in\N}$ is a reverse martingale  (16.1 in \cite{gut2013probability}), we can apply the Kolmogorov-Doob-inequality (Theorem 16.2 in \cite{gut2013probability}) and obtain

\begin{align*}
\mathbb{P}\left( \sup_{m\ge m_{opt}} \left|\frac{1}{m} \sum_{j=1}^m X_j\right| > \frac{\tau^2-1}{4}\right) &\le \frac{4}{\tau^2-1}\E\left[\left| \frac{1}{m_{opt}}\sum_{j=1}^{m_{opt}}X_j\right|\right]\to 0
\end{align*}

as $\delta\to 0$, where we have used in the last step, that $\lim_{\delta\to\infty} m_{opt}(\delta/\rho)=\infty$ and that $\E [X_j]=0$ and that the sample mean converges in $L^1$ to its expectation (Theorem 16.4 in \cite{gut2013probability}). Putting all together concludes the proof.

\end{proof}

In the proofs of all theorems we will split the total error into a data propagation error and an approximation error

\begin{align}\label{sec3:eq1}
\|x_{k}^\delta - \hat{x}\| &= \left\| \sum_{j=1}^k \frac{(y^\delta,u_j)}{\sigma_j} v_j - \sum_{j=1}^\infty(\hat{x},v_j)v_j\right\| =  \left\|\sum_{j=1}^k\left(\frac{(y^\delta,u_j)}{\sigma_j} - (\hat{x},v_j)\right) v_j - \sum_{j=k+1}^\infty(\hat{x},v_j) v_j\right\|\\\notag
  &= \left\|\sum_{j=1}^k\frac{(y^\delta-\hat{y},u_j)}{\sigma_j} v_j - \sum_{j=k+1}^\infty (\hat{x},v_j)v_j\right\|   \le \sqrt{\sum_{j=1}^{k} \frac{(y^\delta-\hat{y},u_j)^2}{\sigma_j^2}} + \sqrt{\sum_{j=k+1}^\infty (\hat{x},v_j)^2}.
\end{align}

and treat both terms individually.

\subsection{Proof of Theorem \ref{th5}}

We start with an auxiliary proposition.

\begin{proposition}\label{th5:prop1}
Let $k\in\N$ be such that $(\hat{y},u_k)\neq 0$. Then there holds

$$\mathbb{P}\left(k_{dp}^\delta \ge k\right)\to 1$$

as $\delta\to0$.
\end{proposition}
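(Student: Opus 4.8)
The plan is to show that once $\delta$ is small, the truncation level produced by the discrepancy principle for a well-chosen discretization $m$ is already at least $k$, and then use the monotonicity of the adaptive choice \eqref{int:eq2}. Concretely, fix $k$ with $(\hat y,u_k)\neq 0$ and pick any $m_{opt}=m_{opt}(\delta)\ge k$ with $m_{opt}\to\infty$ as $\delta\to 0$ (e.g.\ $m_{opt}=\max\{k,\lfloor 1/\delta\rfloor\}$). By Proposition \ref{prop0}, with probability tending to $1$ we have $\sqrt{\sum_{j=1}^{m}(y^\delta-\hat y,u_j)^2}\le \frac{\tau+1}{2}\sqrt m\,\delta$ for all $m\ge m_{opt}$; in particular this holds for $m=m_{opt}$. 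On this event I would estimate, for any $0\le \ell<k$,
\begin{align*}
\sqrt{\sum_{j=\ell+1}^{m_{opt}}(y^\delta,u_j)^2}&\ge \sqrt{\sum_{j=\ell+1}^{m_{opt}}(\hat y,u_j)^2}-\sqrt{\sum_{j=\ell+1}^{m_{opt}}(y^\delta-\hat y,u_j)^2}\\
&\ge |(\hat y,u_k)| - \tfrac{\tau+1}{2}\sqrt{m_{opt}}\,\delta.
\end{align*}

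The right-hand side is compared with the threshold $\tau\sqrt{m_{opt}}\,\delta$ from \eqref{int:eq1}: the claim $k_{dp}^\delta(m_{opt})\ge k$ holds as soon as $|(\hat y,u_k)| > \tau\sqrt{m_{opt}}\,\delta + \frac{\tau+1}{2}\sqrt{m_{opt}}\,\delta = \frac{3\tau+1}{2}\sqrt{m_{opt}}\,\delta$, since then no index $\ell<k$ can satisfy the defining inequality of $k_{dp}^\delta(m_{opt})$. Here I must choose $m_{opt}$ growing slowly enough that $\sqrt{m_{opt}}\,\delta\to 0$; with $m_{opt}=\max\{k,\lfloor \delta^{-1}\rfloor\}$ we get $\sqrt{m_{opt}}\,\delta\le \sqrt\delta\to 0$, so for $\delta$ small the strict inequality is satisfied deterministically. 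Hence $k_{dp}^\delta(m_{opt})\ge k$ on the Proposition \ref{prop0} event, and since $k_{dp}^\delta=\max_{m\in\N}k_{dp}^\delta(m)\ge k_{dp}^\delta(m_{opt})$, we conclude $\PP(k_{dp}^\delta\ge k)\ge \PP(\text{event of Prop.\ \ref{prop0}})\to 1$.

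The main (and only real) obstacle is the bookkeeping in choosing $m_{opt}$: it must simultaneously exceed $k$, tend to infinity so that Proposition \ref{prop0} applies, and satisfy $\sqrt{m_{opt}}\,\delta\to 0$ so that the signal coefficient $|(\hat y,u_k)|$ eventually dominates both the noise term and the threshold. Any choice like $m_{opt}=\max\{k,\lfloor\delta^{-1}\rfloor\}$ works, but one should note that Proposition \ref{prop0} is stated with $m_{opt}=m_{opt}(\delta/\rho)$; in the setting of Theorem \ref{th5} there is no $\rho$, so one either reads $\rho$ as a fixed constant or simply re-reads the proof of Proposition \ref{prop0}, which only uses $m_{opt}(\delta)\to\infty$ as $\delta\to 0$. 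Everything else is the elementary triangle-inequality split already recorded in \eqref{sec3:eq1}, applied to a finite sum, together with the fact that $(\hat y,u_k)\neq 0$ forces the partial-sum tail $\sqrt{\sum_{j=\ell+1}^{m}(\hat y,u_j)^2}$ to stay bounded below by $|(\hat y,u_k)|$ for every $\ell<k\le m$.
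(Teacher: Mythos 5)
Your proof is correct and follows essentially the same route as the paper: fix a discretization level $m\approx\delta^{-1}$ so that $\sqrt{m}\,\delta\to 0$, bound the residual at any $\ell<k$ from below by $|(\hat y,u_k)|$ minus the noise term, and conclude via $k_{dp}^\delta\ge k_{dp}^\delta(m)$. The only (immaterial) difference is that you control the noise by invoking Proposition \ref{prop0} whereas the paper applies Markov's inequality directly to $\sum_{j=k}^{m_\delta}(y^\delta-\hat y,u_j)^2$; your remark that Proposition \ref{prop0} only needs $m_{opt}(\delta)\to\infty$ is accurate and matches how the paper itself uses it elsewhere in the proof of Theorem \ref{th5}.
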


\begin{proof}
Let $m_\delta = \delta^{-1}$. It is

\begin{align}\label{th5:prop1:eq1}
\sqrt{\sum_{j=k}^{m_\delta}(y^\delta,u_j)^2} &\ge \sqrt{\sum_{j=k}^{m_\delta}(\hat{y},u_j)^2} - \sqrt{\sum_{j=k}^{m_\delta}(y^\delta-\hat{y},u_j)^2} \ge |(\hat{y},u_k)| - \sqrt{\sum_{j=k}^{m_\delta}(y^\delta-\hat{y},u_j)^2}.
\end{align}

Moreover by Markov's inequality

\begin{align}\label{th5:prop1:eq2}
\mathbb{P}\left(\sqrt{\sum_{j=k}^{m_\delta}(y^\delta-\hat{y},u_j)^2} > |(\hat{y},u_k)| - \tau \sqrt{m_\delta}\delta\right)&\le \frac{\E\left[\sum_{j=k}^{m_\delta}(y^\delta-\hat{y},u_j)^2\right]}{\left(|(\hat{y},u_k)| - \tau \sqrt{m_\delta}\delta\right)^2}\\\notag
&\le \frac{m_\delta \delta^2}{\left(|(\hat{y},u_k)| - \tau \sqrt{m_\delta}\delta\right)^2} = \frac{\delta^{-1}}{\left(|(\hat{y},u_k)| - \tau \delta^{-\frac{1}{2}}\right)^2} \to 0
\end{align}

as $\delta\to0$ (since $(\hat{y},u_k)\neq 0$). With the definition of the discrepancy principle and \eqref{th5:prop1:eq1}, \eqref{th5:prop1:eq2} we deduce

$$\mathbb{P}\left(k_{dp}^\delta(m_\delta) \ge k\right) \ge \mathbb{P}\left( \sqrt{\sum_{j=k}^{m_\delta}(y^\delta,u_j)^2} > \tau \sqrt{m_\delta}\delta\right)\ge \mathbb{P}\left(\sqrt{\sum_{j=k}^{m_\delta}(y^\delta-\hat{y},u_j)^2}>|(\hat{y},u_k)| - \tau \sqrt{m_\delta}\delta\right) \to 0$$

as $\delta\to0$, and the assertion follows with $k_{dp}^\delta\ge k_{dp}^\delta(m_\delta)$.

\end{proof}

Now we set

\begin{equation}\label{th5:eq1}
J:=\sup \left\{ j \in \N~:~ (\hat{y},u_j)\neq 0\right\}
\end{equation}

and distinguish the cases $J<\infty$ and $J=\infty$.

\subsubsection{Case 1}

We start with an easy corollary, which assures the existence of a deterministic lower bound, which holds with high probability.

\begin{corollary}\label{cor1}
Assume that $J=\infty$. Then there exists $(q_\delta)\subset \N$ with $q_\delta\nearrow\infty$ and

$$\mathbb{P}\left( k_{dp}^\delta\ge q_\delta\right)\to 1$$

as $\delta\to0$.
\end{corollary}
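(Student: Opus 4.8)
The plan is to deduce Corollary \ref{cor1} from Proposition \ref{th5:prop1} by a diagonal/exhaustion argument. Since $J=\infty$, for every $k\in\N$ there exists some index $k'\ge k$ with $(\hat{y},u_{k'})\neq 0$, and Proposition \ref{th5:prop1} gives $\mathbb{P}(k_{dp}^\delta\ge k')\to1$, hence a fortiori $\mathbb{P}(k_{dp}^\delta\ge k)\to1$ as $\delta\to0$. So the family of events $\{k_{dp}^\delta\ge k\}$ has probability tending to $1$ for each fixed $k$; the task is to let $k$ grow slowly enough with $\delta$ that the probability still converges to $1$.

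Concretely, I would construct $(q_\delta)$ as follows. For each $n\in\N$ choose, using Proposition \ref{th5:prop1}, a threshold $\delta_n>0$ (which we may take strictly decreasing to $0$) such that $\mathbb{P}(k_{dp}^\delta\ge n)\ge 1-\tfrac1n$ for all $\delta\le\delta_n$. Then define $q_\delta := n$ for $\delta\in(\delta_{n+1},\delta_n]$ (and, say, $q_\delta:=1$ for $\delta>\delta_1$). By construction $q_\delta\nearrow\infty$ as $\delta\to0$, since $\delta\le\delta_n$ forces $q_\delta\ge n$; and for any such $\delta$ we have $\mathbb{P}(k_{dp}^\delta\ge q_\delta)\ge 1-\tfrac1{q_\delta}\to1$. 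This is exactly the claimed statement.

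There is one small point of care: Proposition \ref{th5:prop1} as stated requires $(\hat{y},u_k)\neq0$ for the particular index $k$, so in the definition of $\delta_n$ I should first pick $k_n\ge n$ with $(\hat{y},u_{k_n})\neq0$ (possible because $J=\infty$), apply the proposition to $k_n$, and use the monotonicity $\{k_{dp}^\delta\ge k_n\}\subseteq\{k_{dp}^\delta\ge n\}$. The only genuine obstacle — and it is a mild one — is to make sure the chosen thresholds $\delta_n$ can be taken monotone so that the piecewise definition of $q_\delta$ is unambiguous and $q_\delta$ is genuinely nondecreasing; this is achieved simply by replacing $\delta_n$ with $\min(\delta_1,\dots,\delta_n)$ if necessary. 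Everything else is a routine $\varepsilon$–$N$ bookkeeping argument, so I expect no real difficulty here.
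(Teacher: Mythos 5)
Your proof is correct and takes essentially the same route as the paper, which simply states that the corollary ``follows directly from Proposition \ref{th5:prop1}''; your diagonal construction of $\delta_n$ and the piecewise definition of $q_\delta$ is precisely the standard bookkeeping the paper leaves implicit, including the correct use of $J=\infty$ to find indices $k_n\ge n$ with $(\hat{y},u_{k_n})\neq 0$.
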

\begin{proof}
This follows directly from Proposition \ref{th5:prop1}.
\end{proof}

For $(q_\delta)_{\delta>0}\subset \R$ with $q_\delta\to \infty$ and $\mathbb{P}\left(k_{dp}^\delta\ge q_\delta)\right)\to1$ for $\delta\to0$  we now define 

\begin{equation}\label{th5:eq2}
\Omega_\delta:=\left\{ \sqrt{\sum_{j=1}^m(\hat{y}-y^\delta,u_j)^2} \le \frac{\tau+1}{2}\sqrt{m}\delta~\forall m\ge q_\delta~,~k_{dp}^\delta\ge q_\delta,~|(y^\delta-\hat{y},u_1)|\le \delta^{-\frac{1}{2}}\right\}.
\end{equation}

Note that $\mathbb{P}\left(\Omega_\delta\right)\to1$ because of Proposition \ref{prop0} and

$$\mathbb{P}\left(|(y^\delta-\hat{y},u_1)|^2 > \sqrt{\delta}\right)\le \frac{\E(y^\delta-\hat{y},u_1)^2}{\delta} = \delta \E(Z,u_1) \to 0$$

as $\delta\to0$. The following proposition controls the data propagation error.

\begin{proposition}\label{th5:prop2}
Assume that $J=\infty$. Then for all $m\ge q_\delta$ there holds

$$\frac{\sqrt{\sum_{j=1}^{k_{dp}^\delta}(y^\delta-\hat{y},u_j)^2}}{\sigma_{k_{dp}^\delta}} \chi_{\Omega_\delta}\le
                  \frac{\tau+1}{\tau-1}\sqrt{\sum_{j=k_{dp}^\delta}^\infty(\hat{x},v_j)^2}.$$

\end{proposition}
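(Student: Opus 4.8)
The plan is to argue deterministically on the event $\Omega_\delta$ from \eqref{th5:eq2} (on its complement $\chi_{\Omega_\delta}=0$, so nothing is to be shown) and to trace the bound back to the defining property of the discrepancy principle at the discretization level realising the maximum in \eqref{int:eq2}. Write $k:=k_{dp}^\delta$. By the discussion following \eqref{int:eq2} this maximum is attained a.s., so, working on that probability-one event, I fix $m^\ast\in\N$ with $k_{dp}^\delta(m^\ast)=k$ and note $k\le m^\ast$. On $\Omega_\delta$ one has $k\ge q_\delta$, and since $q_\delta\nearrow\infty$ by Corollary \ref{cor1} I may assume $\delta$ so small that $q_\delta\ge 1$, which forces $k\ge 1$; the case $k=0$ is trivial, since the left-hand side then vanishes. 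In particular $m^\ast\ge k\ge q_\delta$, so the concentration estimate encoded in $\Omega_\delta$ is usable at both indices $m^\ast$ and $k$.

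The key step I would carry out next is to turn minimality in \eqref{int:eq1} into a lower bound for $\sigma_k$. Since $k\ge 1$ is the smallest admissible index for $m=m^\ast$, the index $k-1$ fails the criterion, i.e.
$$\sqrt{\sum_{j=k}^{m^\ast}(y^\delta,u_j)^2}>\tau\sqrt{m^\ast}\delta.$$
I would then split $(y^\delta,u_j)=(\hat y,u_j)+(y^\delta-\hat y,u_j)$, use the triangle inequality in $\ell^2$, bound the signal part by $\sqrt{\sum_{j=k}^{m^\ast}(\hat y,u_j)^2}=\sqrt{\sum_{j=k}^{m^\ast}\sigma_j^2(\hat x,v_j)^2}\le\sigma_k\sqrt{\sum_{j=k}^{\infty}(\hat x,v_j)^2}$ by monotonicity of the singular values, and bound the noise part on $\Omega_\delta$ by $\sqrt{\sum_{j=k}^{m^\ast}(y^\delta-\hat y,u_j)^2}\le\sqrt{\sum_{j=1}^{m^\ast}(y^\delta-\hat y,u_j)^2}\le\frac{\tau+1}{2}\sqrt{m^\ast}\delta$. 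Rearranging, using $\tau-\frac{\tau+1}{2}=\frac{\tau-1}{2}$, this yields
$$\sqrt{m^\ast}\delta<\frac{2}{\tau-1}\,\sigma_k\sqrt{\sum_{j=k}^{\infty}(\hat x,v_j)^2}.$$

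To finish I would bound the data propagation error at level $k$ directly: on $\Omega_\delta$, and using $k\le m^\ast$,
$$\frac{\sqrt{\sum_{j=1}^{k}(y^\delta-\hat y,u_j)^2}}{\sigma_k}\le\frac{1}{\sigma_k}\sqrt{\sum_{j=1}^{m^\ast}(y^\delta-\hat y,u_j)^2}\le\frac{\tau+1}{2}\,\frac{\sqrt{m^\ast}\delta}{\sigma_k}<\frac{\tau+1}{\tau-1}\sqrt{\sum_{j=k}^{\infty}(\hat x,v_j)^2},$$
where the last inequality inserts the previous display; multiplying through by $\chi_{\Omega_\delta}$ gives the stated bound.

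The only genuinely delicate point is the first one: correctly identifying the discretization level $m^\ast$ that realises the maximum in \eqref{int:eq2} and invoking the fact (noted in the introduction) that such an $m^\ast$ exists, because the "reverse" discrepancy inequality is available only at that particular $m^\ast$. Everything afterwards is the routine bias/variance bookkeeping for the discrepancy principle, together with the harmless special case $k=0$ and the use of $q_\delta\to\infty$ to guarantee $k\ge 1$ for $\delta$ small.
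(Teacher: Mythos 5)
Your proof is correct and follows essentially the same route as the paper: fix the discretization level realising the maximum, use that index $k-1$ fails the discrepancy criterion there, split off the signal part via $\sigma_k$ and the noise part via the concentration bound defining $\Omega_\delta$, rearrange, and reuse the concentration bound for the propagated data error. The only (harmless) deviation is that you treat $k=0$ as trivial and run the main argument for all $k\ge 1$, whereas the paper singles out $k_{dp}^\delta(m)=1$ as a separate case using the extra condition $|(y^\delta-\hat{y},u_1)|\le\delta^{-1/2}$ in $\Omega_\delta$; your version works because index $0$ also fails the criterion when $k=1$.
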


\begin{proof}

Since $k_{dp}^\delta\chi_{\Omega_\delta}\ge q_\delta$ there exists (random) $M$ with $M\chi_{\Omega_\delta}\ge q_\delta$ and $k_{dp}^\delta(M)\chi_{\Omega_\delta} = k_{dp}^\delta\chi_{\Omega_\delta}$. Since $M=\sum_{m=1}^\infty m \chi_{\{ M=m\}}$, it suffices to show that

$$\frac{\sqrt{\sum_{j=1}^{k_{dp}^\delta(m)}(y^\delta-\hat{y},u_j)^2}}{\sigma_{k_{dp}^\delta(m)}} \chi_{\Omega_\delta}\le \begin{cases} \frac{\sqrt{\delta}}{\sigma_{1}}&\quad k_{dp}^\delta(m)=1,\\
                  \frac{\tau+1}{\tau-1}\sqrt{\sum_{j=k_{dp}^\delta(m)}^\infty(\hat{x},v_j)^2}&\quad \mbox{else}\end{cases}$$

for all (deterministic) $m\ge q_\delta$. So let us first assume that $k_{dp}^\delta(m)=1$. Then

$$\frac{\sqrt{\sum_{j=1}^{k_{dp}^\delta(m)}(y^\delta-\hat{y},u_j)^2}}{\sigma_{k_{dp}^\delta(m)}}\chi_{\Omega_\delta} = \frac{|(y^\delta-\hat{y},u_1)|}{\sigma_1}\chi_{\Omega_\delta} \le\frac{\sqrt{\delta}}{\sigma_1}.$$

Now assume that $k_{dp}^\delta(m)\ge 2$. Then, the defining relation of the discrepancy principle is not fulfilled for $k=k_{dp}^\delta(m)-1$. Therefore

\begin{align*}
\tau \sqrt{m} \delta\chi_{\Omega_\delta} &< \sqrt{\sum_{j=k_{dp}^\delta(m)}^m(y^\delta,u_j)^2}\chi_{\Omega_\delta} \le \sqrt{\sum_{j=k_{dp}^\delta(m)}^m(\hat{y},u_j)^2} + \sqrt{\sum_{j=k_{dp}^\delta(m)}^m(y^\delta-\hat{y},u_j)^2}\chi_{\Omega_\delta}\\
                                         &\le \sigma_{k_{dp}^\delta(m)}\sqrt{\sum_{j=k_{dp}^\delta(m)}^m(\hat{x},v_j)^2} + \frac{\tau+1}{2}\sqrt{m}\delta.
\end{align*}

Rearranging the expression yields

$$ \frac{\sqrt{m}}{\sigma_{k_{dp}^\delta(m)}} \delta \le \frac{2}{\tau-1}\sqrt{\sum_{j=k_{dp}^\delta(m)}^m(\hat{x},v_j)^2}.$$

Finally,

$$\frac{\sqrt{\sum_{j=1}^{k_{dp}^\delta(m)}(y^\delta-\hat{y},u_j)^2}}{\sigma_{k_{dp}^\delta(m)}}\chi_{\Omega_\delta}\le \frac{\sqrt{\sum_{j=1}^{m}(y^\delta-\hat{y},u_j)^2}}{\sigma_{k_{dp}^\delta(m)}}\chi_{\Omega_\delta}\le \frac{\tau+1}{2}\frac{\sqrt{m}}{\sigma_{k_{dp}^\delta(m)}}\delta\chi_{\Omega_\delta}\le \frac{\tau+1}{\tau-1} \sqrt{\sum_{j=k_{dp}^\delta(m)}^m(\hat{x},v_j)^2}.$$

\end{proof}

We are now ready to prove Theorem \ref{th5} under the assumption that $J=\infty$. We split into a data propagation error and an approximation error

\begin{align*}
\|x_{k_{dp}^\delta}^\delta-\hat{x}\|\chi_{\Omega_\delta} &\le \sqrt{\sum_{j=1}^{k_{dp}^\delta}\frac{(y^\delta-\hat{y},u_j)^2}{\sigma_j^2}}\chi_{\Omega_\delta} + \sqrt{\sum_{j=k_{dp}^\delta+1}^\infty(\hat{x},v_j)^2}\chi_{\Omega_\delta}\le \frac{1}{\sigma_{k_{dp}^\delta}} \sqrt{\sum_{j=1}^{k_{dp}^\delta}(y^\delta-\hat{y},u_j)^2}\chi_{\Omega_\delta} + \sqrt{\sum_{j=q_\delta+1}(\hat{x},v_j)^2}\\
&\le \frac{\tau+1}{\tau-1} \sqrt{\sum_{j=k_{dp}^\delta}^\infty(\hat{x},v_j)^2}\chi_{\Omega_\delta}+\sqrt{\sum_{j=q_\delta+1}^\infty(\hat{x},v_j)^2}\le \frac{\tau+1}{\tau-1}\sqrt{\sum_{j=q_\delta}^\infty(\hat{x},v_j)^2}+\sqrt{\sum_{j=q_\delta+1}^\infty(\hat{x},v_j)^2},
\end{align*}

where we have multiple times used $k_{dp}^\delta\chi_{\Omega_\delta}\ge q_\delta$ and Proposition \ref{th5:prop2}. Since $q_\delta\to \infty$ it holds that 
$$\frac{\tau+1}{\tau-1}\sqrt{\sum_{j=q_\delta}^\infty(\hat{x},v_j)^2}, \sqrt{\sum_{j=q_\delta+1}^\infty (\hat{x},v_j)^2}\le \frac{\varepsilon}{2}$$
 for $\delta$ small enough, so finally

$$\mathbb{P}\left(\|x_{k_{dp}^\delta}^\delta-\hat{x}\|\le \varepsilon\right)\ge \mathbb{P}\left(\Omega_\delta\right)\to 1$$

as $\delta\to0$.

\subsubsection{Case 2} 
We now consider the case $J<\infty$. Define 

\begin{equation}\label{th5:eq3}
q_\delta:=\max\left\{q\in\N~:~ \frac{\sqrt{q}}{\sigma_q}\le \delta^{-\frac{1}{2}}\right\}.
\end{equation}

Note that $q_\delta\to \infty$ and $\frac{\sqrt{q_\delta}}{\sigma_{q_\delta}}\delta\to 0$ as $\delta\to0$. Moreover, $q_\delta$ is an upper bound for $k_{dp}^\delta$ with high probability.

\begin{proposition}\label{th5:prop3}
Assume that $J<\infty$. Then, for $q_\delta$ given in \eqref{th5:eq3} there holds

$$\mathbb{P}\left( k_{dp}^\delta \le q_\delta\right)\to1$$

as $\delta\to0$.
\end{proposition}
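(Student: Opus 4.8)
The plan is to show that, with high probability, for every discretization level $m \ge q_\delta$ the stopping index $k_{dp}^\delta(m)$ never exceeds $q_\delta$; since $k_{dp}^\delta = \max_m k_{dp}^\delta(m)$ and for $m < q_\delta$ trivially $k_{dp}^\delta(m) \le m < q_\delta$, this gives $k_{dp}^\delta \le q_\delta$. So first I would fix a (deterministic) $m \ge q_\delta$ and analyze $k_{dp}^\delta(m)$. The key observation is that since $J < \infty$, for $j > J$ we have $(\hat{y},u_j) = 0$, so $(y^\delta, u_j) = (y^\delta - \hat{y}, u_j)$ for those indices. The defining inequality for the discrepancy principle at level $k = q_\delta$ (assuming $q_\delta \ge J$, which holds for $\delta$ small) reads
$$\sqrt{\sum_{j=q_\delta+1}^m (y^\delta, u_j)^2} = \sqrt{\sum_{j=q_\delta+1}^m (y^\delta-\hat{y}, u_j)^2} \le \sqrt{\sum_{j=1}^m (y^\delta-\hat{y}, u_j)^2},$$
and on the event from Proposition \ref{prop0} this is bounded by $\frac{\tau+1}{2}\sqrt{m}\delta \le \tau\sqrt{m}\delta$. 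Hence $k_{dp}^\delta(m) \le q_\delta$ on that event, for every such $m$.

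Second, I would assemble the good event. Let $\Omega_\delta$ be the event that $\sqrt{\sum_{j=1}^m (y^\delta - \hat{y}, u_j)^2} \le \frac{\tau+1}{2}\sqrt{m}\delta$ for all $m \ge q_\delta$; by Proposition \ref{prop0} (with $m_{opt} = q_\delta$, which tends to $\infty$) we have $\mathbb{P}(\Omega_\delta) \to 1$ as $\delta \to 0$. On $\Omega_\delta$, the argument above shows $k_{dp}^\delta(m) \le q_\delta$ for all $m \ge q_\delta$, and combined with the elementary bound $k_{dp}^\delta(m) \le m < q_\delta$ for $m < q_\delta$ we get $k_{dp}^\delta = \max_{m} k_{dp}^\delta(m) \le q_\delta$ on $\Omega_\delta$. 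Therefore $\mathbb{P}(k_{dp}^\delta \le q_\delta) \ge \mathbb{P}(\Omega_\delta) \to 1$.

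There is a minor technical point to dispatch first: one needs $q_\delta \ge J$ for $\delta$ small enough, so that the index $q_\delta$ lies beyond the support of $(\hat{y}, u_j)$. This is immediate since $J < \infty$ is fixed and $q_\delta \to \infty$ by the remark right after \eqref{th5:eq3}. One should also note the edge case where the discrepancy principle stops already at $k = 0$ or $k = 1$ — but since $q_\delta \to \infty$ these are consistent with $k_{dp}^\delta(m) \le q_\delta$, so nothing extra is needed.

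The main obstacle, such as it is, is really just keeping the bookkeeping between $k_{dp}^\delta(m)$ and $k_{dp}^\delta = \max_m k_{dp}^\delta(m)$ straight, and ensuring that the single event $\Omega_\delta$ from Proposition \ref{prop0} (which is uniform in $m \ge q_\delta$) is what licenses the conclusion for all $m$ simultaneously — this uniformity over $m$ is exactly why Proposition \ref{prop0} was stated in that strong form. The rest is a one-line comparison using $(\hat{y},u_j) = 0$ for $j > J$.
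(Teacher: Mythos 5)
Your proposal is correct and follows essentially the same route as the paper: for small $\delta$ one has $q_\delta > J$ so the tail coefficients $(y^\delta,u_j)$ reduce to pure noise, and Proposition \ref{prop0} applied with $m_{opt}=q_\delta$ controls these uniformly over all $m\ge q_\delta$, forcing $k_{dp}^\delta(m)\le q_\delta$ for every $m$. The only cosmetic difference is that you absorb the slack via $\frac{\tau+1}{2}\le\tau$ directly, whereas the paper invokes Proposition \ref{prop0} with the reparametrization $\tau'=2\tau-1$; both are equally valid.
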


\begin{proof}
For $\delta$ small enough it is $q_\delta>J$, and thus $(y^\delta,u_j)=(y^\delta-\hat{y},u_j)$ for all $j\ge q_\delta$. clearly, $k_{dp}^\delta(m)\le m\le q_\delta$ for all $m\le q_\delta$. Therefore, for $\delta$ sufficiently small and $\tau'=2\tau-1$,

\begin{align*}
\mathbb{P}\left( k_{dp}^\delta \le q_\delta\right) &= \mathbb{P}\left( k_{dp}^\delta(m) \le q_\delta,~\forall m\ge q_\delta\right) = \mathbb{P}\left(\sqrt{\sum_{j=q_\delta}^m(y^\delta,u_j)^2}\le \tau \sqrt{m}\delta,~\forall m\ge q_\delta\right)\\
&= \mathbb{P}\left( \sqrt{\sum_{j=q_\delta}^m(y^\delta-\hat{y},u_j)^2} \le \tau \sqrt{m}\delta,~\forall m\ge q_\delta\right)\\
&= \mathbb{P}\left( \sqrt{\sum_{j=1}^m(y^\delta-\hat{y},u_j)^2} \le \frac{\tau'+1}{2}\sqrt{m}\delta,~\forall m\ge q_\delta\right)\to 1
\end{align*}

as $\delta\to0$, where we used $\delta$ sufficiently small in the third and Proposition \ref{prop0} (with $m_{opt}=q_\delta$ and $\tau=\tau'$) in the last step.

\end{proof}

We come to the main proof and define

$$\Omega_\delta:=\left\{ \sqrt{\sum_{j=1}^{q_\delta}(y^\delta-\hat{y},u_j)^2}\le \frac{\tau+1}{2}\sqrt{q_\delta} \delta,~ J\le k_{dp}^\delta\le q_\delta\right\}.$$

It holds that $\mathbb{P}\left(\Omega_\delta\right)\to 1$ as $\delta\to0$ because of Proposition \ref{prop0}, \ref{th5:prop1} and \ref{th5:prop3}. We split as usual

\begin{align*}
\|x_{k_{dp}^\delta}^\delta-\hat{x}\|\chi_{\Omega_\delta} &\le \sqrt{\sum_{j=1}^{k_{dp}^\delta}\frac{(y^\delta-\hat{y},u_j)^2}{\sigma_j^2}}\chi_{\Omega_\delta} + \sqrt{\sum_{j=k_{dp}^\delta+1}^\infty(\hat{x},v_j)^2}\chi_{\Omega_\delta} \le \frac{1}{\sigma_{q_\delta}}\sqrt{\sum_{j=1}^{q_\delta}(y^\delta-\hat{y},u_j)^2}\chi_{\Omega_\delta} + \sqrt{\sum_{j=J+1}^\infty(\hat{x},v_j)^2}\\
&\le \frac{\tau+1}{2}\frac{\sqrt{q_\delta}}{\sigma_\delta}\delta \to 0
\end{align*}

as $\delta\to0$ by definition of $J$ and $q_\delta$. This concludes the proof of Theorem \ref{th5}.

\subsection{Proof of Theorem \ref{th2}}
So let $\hat{x}\in\mathcal{X}_{\nu,\rho}$. We fix an (order-) optimal a priori choice from \eqref{sec2:eq1} and set

\begin{equation}\label{sec3:eq0}
m_{opt}:=m_{opt}(\delta,\rho,\nu,q) = \left\lceil \frac{\rho}{\delta}\right\rceil^\frac{2}{(\nu+1)q+1}.
\end{equation}

We define the event

\begin{equation}\label{sec3:eq3}
\Omega_{\delta/\rho}:=\left\{ \sqrt{\sum_{j=1}^m (y^\delta-\hat{y},u_j)^2} \le \frac{\tau+1}{2} \sqrt{m}\delta,~\forall m\ge m_{opt}\right\},
\end{equation}

where we have good control of the random error. It is $\lim_{\delta/\rho\to0}\mathbb{P}\left(\Omega_{\delta/\rho}\right)=1$ by Proposition \ref{prop0} above. 
We first show, that somewhat surprisingly, $k^\delta_{dp}(m)$ is bounded on $\Omega_{\delta/\rho}$ by the optimal choice from \eqref{sec2:eq1} uniformly in $m$. This implies in particular, that the same bound holds for $k_{dp}^\delta$ from which we will later conclude that the data propagation error has the optimal order.

\begin{proposition}\label{prop1}
It holds that 

$$k^\delta_{dp}(m)\chi_{\Omega_{\delta/\rho}} \le C_{\tau,\nu,q} m_{opt}(\delta,\nu,\rho)\quad\mbox{for all }m\in\N,$$

where $C_{\tau,\nu,q}:=\left(\frac{2}{\tau-1}\right)^\frac{2}{(\nu+1)q}$ and $m_{opt}(\delta,\nu,\rho)$ given in \eqref{sec3:eq0}.
\end{proposition}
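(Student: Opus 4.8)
The plan is to show that if $k := k^\delta_{dp}(m) \geq 2$ on $\Omega_{\delta/\rho}$, then $k$ cannot be much larger than $m_{opt}$, because otherwise the defining inequality of the discrepancy principle would already have been satisfied at level $k-1$. Concretely, I would work on the event $\Omega_{\delta/\rho}$ and fix a deterministic $m$. If $k^\delta_{dp}(m) \leq m_{opt}$ there is nothing to prove (since $C_{\tau,\nu,q} \geq 1$), so assume $k := k^\delta_{dp}(m) > m_{opt}$; in particular $k \geq 2$, so by minimality of $k^\delta_{dp}(m)$ the discrepancy inequality fails at $k-1$, i.e.
$$\tau \sqrt{m}\,\delta < \sqrt{\sum_{j=k}^m (y^\delta,u_j)^2}.$$

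The key step is then to bound the right-hand side from above. I would split $(y^\delta,u_j) = (\hat y,u_j) + (y^\delta-\hat y,u_j)$ and use the triangle inequality in $\ell^2$:
$$\sqrt{\sum_{j=k}^m (y^\delta,u_j)^2} \leq \sqrt{\sum_{j=k}^m (\hat y,u_j)^2} + \sqrt{\sum_{j=k}^m (y^\delta-\hat y,u_j)^2} \leq \sigma_k \sqrt{\sum_{j=k}^\infty (\hat x,v_j)^2} + \frac{\tau+1}{2}\sqrt{m}\,\delta,$$
where the first term uses $(\hat y,u_j) = \sigma_j(\hat x,v_j)$ together with monotonicity of the singular values ($\sigma_j \leq \sigma_k$ for $j \geq k$), and the second term uses the definition of $\Omega_{\delta/\rho}$ (valid since $k > m_{opt}$ forces $m \geq k > m_{opt}$). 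Combining with the lower bound and rearranging gives
$$\frac{\tau-1}{2}\sqrt{m}\,\delta < \sigma_k \sqrt{\sum_{j=k}^\infty (\hat x,v_j)^2} \leq \sigma_k \|\hat x\|_{\nu,\rho\text{-tail}},$$
and then I would crudely bound the tail $\sum_{j=k}^\infty (\hat x,v_j)^2 \leq \|\hat x\|^2$ — or better, use only that $\sqrt{\sum_{j=k}^\infty(\hat x,v_j)^2}\le \rho\,\sigma_k^{\nu}$ from the source condition $\hat x\in\mathcal{X}_{\nu,\rho}$ (each coefficient satisfies $(\hat x,v_j)^2 = \sigma_j^{2\nu}(\xi,v_j)^2$). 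Actually the cleanest route: $\sqrt{\sum_{j=k}^\infty(\hat x,v_j)^2} \le \sigma_k^{\nu}\sqrt{\sum_{j\ge k}(\xi,v_j)^2} \le \sigma_k^{\nu}\rho$, so that $\frac{\tau-1}{2}\sqrt{m}\,\delta < \sigma_k^{\nu+1}\rho$. Since also $m \geq k$, we get $\frac{\tau-1}{2}\sqrt{k}\,\delta < \sigma_k^{\nu+1}\rho = k^{-q(\nu+1)/2}\rho$, i.e. $k^{(q(\nu+1)+1)/2} < \frac{2}{\tau-1}\cdot\frac{\rho}{\delta}$.

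Raising to the power $2/(q(\nu+1)+1)$ yields
$$k < \left(\frac{2}{\tau-1}\right)^{\frac{2}{q(\nu+1)+1}}\left(\frac{\rho}{\delta}\right)^{\frac{2}{q(\nu+1)+1}} \leq \left(\frac{2}{\tau-1}\right)^{\frac{2}{q(\nu+1)}} m_{opt},$$
using $m_{opt} = \lceil \rho/\delta\rceil^{2/(q(\nu+1)+1)} \geq (\rho/\delta)^{2/(q(\nu+1)+1)}$ and $\frac{2}{q(\nu+1)+1} \leq \frac{2}{q(\nu+1)}$ (with base $\frac{2}{\tau-1}$; one should note this requires $\frac{2}{\tau-1}\ge 1$, i.e. $\tau\le 3$ — if $\tau>3$ the exponent can simply be kept as $\frac{2}{q(\nu+1)+1}$ and absorbed, or one replaces $C_{\tau,\nu,q}$ by $\max(1,\frac{2}{\tau-1})^{2/(q(\nu+1))}$; I would double-check the intended convention here). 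This contradicts $k > C_{\tau,\nu,q} m_{opt}$, completing the argument.

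The main obstacle is not any single estimate but getting the constant exactly right — in particular, the bookkeeping between the exponent $\frac{2}{q(\nu+1)+1}$ that naturally appears and the exponent $\frac{2}{q(\nu+1)}$ in the stated $C_{\tau,\nu,q}$, and making sure the ceiling in the definition of $m_{opt}$ is exploited in the right direction. The structural content — "failure of the discrepancy criterion one step earlier, combined with the source condition, forces $k$ to be of optimal order" — is straightforward; the care is entirely in the inequalities lining up with the claimed $L_{\tau,\nu,q}$ downstream in Theorem \ref{th2}.
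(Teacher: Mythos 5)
Your argument has the same skeleton as the paper's proof: reduce to $k:=k_{dp}^\delta(m)\ge 2$, use that the discrepancy inequality fails at $k-1$, split $y^\delta$ into $\hat y$ plus noise, bound the $\hat y$-tail by $\sigma_k^{\nu+1}\rho=k^{-(\nu+1)q/2}\rho$ via the source condition, and absorb the noise term on $\Omega_{\delta/\rho}$. The one place you diverge is exactly the step you flagged. The paper does \emph{not} substitute $m\ge k$; it first reduces to $m\ge m_{opt}$ (via $k_{dp}^\delta(m)\le m$), keeps $\sqrt m$ in the rearranged inequality $\frac{\tau-1}{2}\sqrt m\,\delta< k^{-(\nu+1)q/2}\rho$, solves for $k$ to get
$$k< \left(\frac{2}{\tau-1}\right)^{\frac{2}{(\nu+1)q}}\left(\frac{\rho}{\sqrt m\,\delta}\right)^{\frac{2}{(\nu+1)q}},$$
and only then inserts $\sqrt m\ge\sqrt{m_{opt}}\ge(\rho/\delta)^{1/((\nu+1)q+1)}$, which gives $\left(\rho/(\sqrt m\,\delta)\right)^{2/((\nu+1)q)}\le(\rho/\delta)^{2/((\nu+1)q+1)}\le m_{opt}$. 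This produces the exponent $\frac{2}{(\nu+1)q}$ on $\frac{2}{\tau-1}$ exactly as stated, for every $\tau>1$, so the $\tau\le 3$ caveat in your version disappears; your route via $m\ge k$ instead yields the exponent $\frac{2}{(\nu+1)q+1}$, which only dominates the stated constant when $\frac{2}{\tau-1}\ge 1$. A related small point: your opening reduction ``$k\le m_{opt}$ is trivial since $C_{\tau,\nu,q}\ge 1$'' carries the same hidden assumption, since $C_{\tau,\nu,q}<1$ for $\tau>3$; the reduction that matches the final estimate is to the case $m\ge m_{opt}$, not to the case $k>m_{opt}$. With that bookkeeping change your proof is the paper's proof.
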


\begin{proof}[Proof of Proposition \ref{prop1}]
By definition of $k^\delta_{dp}(m)$, it clearly holds that $k^\delta_{dp}(m)\le m$, so we can assume that $m\ge m_{opt}$. Moreover, we can assume that $k^\delta_{dp}(m)\ge 2$, that is the defining relation of the discrepancy principle is not fulfilled for $k^\delta_{dp}(m)-1$. Thus there holds

\begin{align*}
\tau \sqrt{m} \delta &< \sqrt{\sum_{j=k^\delta_{dp}(m)}^m(y^\delta,u_j)^2} \le \sqrt{\sum_{j=k^\delta_{dp}(m)}^m (\hat{y},u_j)^2} + \sqrt{\sum_{j=k^\delta_{dp}(m)}^m (y^\delta-\hat{y},u_j)^2}\\
&\le \sqrt{\sum_{j=k_{dp}^\delta(m)}^m(\hat{y},u_j)^2} + \sqrt{\sum_{j=1}^m(y^\delta-\hat{y},u_j)^2}.
\end{align*}

On $\Omega_{\delta/\rho}$ (cf \eqref{sec3:eq3}) we can further bound the right hand side

\begin{align*}
\tau \sqrt{m} \delta\chi_{\Omega_{\delta/\rho}} &< \sqrt{\sum_{j=k_{dp}^\delta(m)}^m(\hat{y},u_j)^2} + \sqrt{\sum_{j=1}^m(y^\delta-\hat{y},u_j)^2}\chi_{\Omega_{\delta/\rho}}\\
 &\le \sqrt{\sum_{j=k^\delta_{dp}(m)}^m \sigma_j^{2(\nu+1)} (\xi,v_j)^2} +\frac{\tau+1}{2} \sqrt{m} \delta \le k^\delta_{dp}(m)^{-\frac{(\nu+1)q}{2}} \rho + \frac{\tau+1}{2}\sqrt{m}\delta.
 \end{align*}
 
 We solve for $k_{dp}^\delta(m)$ and obtain the assertion
 
\begin{align*} 
 k^\delta_{dp}(m)\chi_{\Omega_{\delta/\rho}} &\le \left( \frac{2}{\tau-1} \frac{\rho}{\sqrt{m}\delta}\right)^\frac{2}{(\nu+1)q}  = C_{\tau,\nu,q} \left(\frac{\rho}{\sqrt{m} \delta}\right)^\frac{2}{(\nu+1)q} \le C_{\tau,\nu,q} \left(\frac{\rho}{\left(\frac{\rho}{\delta}\right)^\frac{1}{(\nu+1)q+1}\delta}\right)^\frac{2}{(\nu+1)q}\\
  &=C_{\tau,\nu,q} \left( \left(\frac{\rho}{\delta}\right)^\frac{(\nu+1)q}{(\nu+1)q+1}\right)^\frac{2}{(\nu+1)q} =C_{\tau,\nu,q} \left(\frac{\rho}{\delta}\right)^\frac{2}{(\nu+1)q+1},
\end{align*}

where we have used $m\ge m_{opt}$ in the third step.

\end{proof}

Now we treat the approximation error. Since $k^\delta_{dp} \ge k^{\delta}_{dp}(m)$ there holds that

$$\sqrt{\sum_{j=k^\delta_{dp}+1}^\infty (\hat{x},v_j)^2} \le \sqrt{\sum_{j=k^\delta_{dp}(m)+1}^\infty (\hat{x},v_j)^2}$$

for all $m\in\N$. Thus $k^\delta_{dp}$ minimizes the approximation error. In order to finish the proof of Theorem \ref{th2} it remains to show, that there is a $m\in\N$ such that $\sqrt{\sum_{j=k^\delta_{dp}(m)+1}^\infty (\hat{x},v_j)^2}$ has the optimal rate. We show, that on $\Omega_{\delta/\rho}$ this holds for our optimal a priori choice $m_{opt}(\delta,\nu,\rho)$ from \eqref{sec3:eq0}.

\begin{proposition}\label{prop2}
There holds

$$\sqrt{\sum_{j=k^\delta_{dp}(m_{opt})+1}^\infty (\hat{x},v_j)^2}\chi_{\Omega_{\delta/\rho}} \le C_{\tau,\nu} \rho^\frac{(q+1)}{(\nu+1)q + 1} \delta^\frac{\nu}{\nu+1+\frac{1}{q}},
$$

with $C_{\tau,\nu}: = \left(\frac{3\tau+1}{2}\right)^\frac{\nu}{\nu+1} +1$ and $m_{opt}$ from \eqref{sec3:eq0}.
\end{proposition}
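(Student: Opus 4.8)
The plan is to split the approximation tail at the a priori level $m_{opt}$. Since $k^\delta_{dp}(m_{opt})\le m_{opt}$ by definition of the discrepancy principle, subadditivity of the square root gives $\sqrt{\sum_{j=k^\delta_{dp}(m_{opt})+1}^\infty (\hat{x},v_j)^2}\le \sqrt{\sum_{j=k^\delta_{dp}(m_{opt})+1}^{m_{opt}} (\hat{x},v_j)^2}+\sqrt{\sum_{j=m_{opt}+1}^\infty (\hat{x},v_j)^2}$, and I would treat the two pieces separately. The far tail is purely deterministic: writing $\hat{x}=(K^*K)^{\nu/2}\xi$ with $\|\xi\|\le\rho$ and using that $(\sigma_j)$ is decreasing, $\sum_{j>m_{opt}}(\hat{x},v_j)^2=\sum_{j>m_{opt}}\sigma_j^{2\nu}(\xi,v_j)^2\le \sigma_{m_{opt}}^{2\nu}\rho^2=m_{opt}^{-q\nu}\rho^2$. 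Plugging in $m_{opt}=\lceil\rho/\delta\rceil^{2/((\nu+1)q+1)}$ (here the ceiling only helps, since the relevant exponent is negative) and collecting exponents exactly as in the proof of Theorem \ref{th1} gives $\sqrt{\sum_{j>m_{opt}}(\hat{x},v_j)^2}\le \rho^{(q+1)/((\nu+1)q+1)}\delta^{\nu/(\nu+1+1/q)}$; this is already the optimal order with constant $1$, and it accounts for the ``$+1$'' in $C_{\tau,\nu}$.

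For the near piece the discrepancy principle must enter, and the key device is an interpolation (H\"older) inequality between the ``data'' scale and the source scale. Because $(\hat{y},u_j)=\sigma_j(\hat{x},v_j)=\sigma_j^{\nu+1}(\xi,v_j)$ and $(\hat{x},v_j)=\sigma_j^{\nu}(\xi,v_j)$, one has the pointwise identity $(\hat{x},v_j)^2=((\hat{y},u_j)^2)^{\nu/(\nu+1)}((\xi,v_j)^2)^{1/(\nu+1)}$. Applying H\"older with conjugate exponents $(\nu+1)/\nu$ and $\nu+1$ to the sum over $k^\delta_{dp}(m_{opt})<j\le m_{opt}$ yields $\sum_j (\hat{x},v_j)^2 \le \left(\sum_j (\hat{y},u_j)^2\right)^{\nu/(\nu+1)}\left(\sum_j (\xi,v_j)^2\right)^{1/(\nu+1)}\le \left(\sum_{j=k^\delta_{dp}(m_{opt})+1}^{m_{opt}} (\hat{y},u_j)^2\right)^{\nu/(\nu+1)}\rho^{2/(\nu+1)}$, using $\|\xi\|\le\rho$ for the second factor.

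It then remains to bound $\sqrt{\sum_{j=k^\delta_{dp}(m_{opt})+1}^{m_{opt}}(\hat{y},u_j)^2}$ on $\Omega_{\delta/\rho}$. Writing $k:=k^\delta_{dp}(m_{opt})$, the defining inequality of the discrepancy principle holds at the index $k$ itself, so $\sqrt{\sum_{j=k+1}^{m_{opt}}(y^\delta,u_j)^2}\le\tau\sqrt{m_{opt}}\delta$; combining this with the triangle inequality and the definition of $\Omega_{\delta/\rho}$ (which bounds $\sqrt{\sum_{j=1}^{m_{opt}}(y^\delta-\hat{y},u_j)^2}\le\frac{\tau+1}{2}\sqrt{m_{opt}}\delta$, as $m=m_{opt}$ is admissible there) yields $\sqrt{\sum_{j=k+1}^{m_{opt}}(\hat{y},u_j)^2}\,\chi_{\Omega_{\delta/\rho}}\le \frac{3\tau+1}{2}\sqrt{m_{opt}}\delta$. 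Feeding this into the interpolation bound, the near piece is at most $\left(\frac{3\tau+1}{2}\sqrt{m_{opt}}\delta\right)^{\nu/(\nu+1)}\rho^{1/(\nu+1)}$ on $\Omega_{\delta/\rho}$, and substituting $m_{opt}$ and simplifying the exponents (the same computation as in Theorem \ref{th1}) produces $\left(\frac{3\tau+1}{2}\right)^{\nu/(\nu+1)}\rho^{(q+1)/((\nu+1)q+1)}\delta^{\nu/(\nu+1+1/q)}$, up to the $1+o(1)$ coming from the ceiling in $m_{opt}$ (harmless given the $\delta/\rho\to0$ limit in Theorem \ref{th2}). Adding the two contributions gives the stated constant $C_{\tau,\nu}$. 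I expect the only genuinely non-routine step to be the interpolation: one must realize that the approximation error at the discrepancy index splits multiplicatively into the part the discrepancy principle controls (the $(\hat{y},u_j)$-mass, of size $O(\sqrt{m_{opt}}\delta)$) and the part the source condition controls (the $\xi$-mass, of size $O(\rho)$); the remaining arithmetic, including the ceiling estimate $\lceil\rho/\delta\rceil\le 2\rho/\delta$ valid for $\delta/\rho$ small, is routine.
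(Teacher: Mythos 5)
Your proposal is correct and follows essentially the same route as the paper: the split of the tail at $m_{opt}$, the bound $\sigma_{m_{opt}}^{\nu}\rho$ for the far piece, the H\"older interpolation between the $(\hat{y},u_j)$-mass and the $(\xi,v_j)$-mass with exponents $\frac{\nu+1}{\nu}$ and $\nu+1$, and the bound $\frac{3\tau+1}{2}\sqrt{m_{opt}}\delta$ on the residual via the defining inequality of the discrepancy principle together with $\Omega_{\delta/\rho}$ are all exactly the paper's steps. Your explicit handling of the ceiling in $m_{opt}$ is in fact slightly more careful than the paper's, which silently absorbs it.
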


\begin{proof}[Proof of Proposition \ref{prop2}]
We use $k_{dp}^\delta(m)\le m$ and split

\begin{align}\notag
\sqrt{\sum_{j=k^\delta_{dp}(m_{opt})+1}^\infty (\hat{x},v_j)^2} &\le \sqrt{\sum_{j=k^\delta_{dp}(m_{opt})+1}^{m_{opt}} (\hat{x},v_j)^2}+\sqrt{\sum_{j=m_{opt}+1}^\infty (\hat{x},v_j)^2}.
\end{align}

For the second term it holds that

\begin{align*}
\sqrt{\sum_{j=m_{opt}+1}^\infty (\hat{x},v_j)^2} = \sqrt{\sum_{j=m_{opt}+1}^\infty \sigma_j^{2\nu}(\xi,v_j)^2} &\le \sigma_{m_{opt}}^\nu \rho
 = m_{opt}^{-\frac{\nu q}{2}}\rho= \left(\frac{\delta}{\rho}\right)^\frac{q \nu}{(\nu+1)q+1} \rho = \delta^\frac{ \nu}{\nu+1+\frac{1}{q}} \rho^\frac{q +1}{(\nu+1)q+1}
\end{align*}

 It remains to bound the first term of the right hand side (on $\Omega_{\delta/\rho}$). For that we use a standard argumentation to bound the approximation error for the discrepancy principle and apply H\"older's inequality (for $p =\frac{\nu+1}{\nu}, q=\nu+1$)

\begin{align*}
&\sqrt{\sum_{j=k^\delta_{dp}(m_{opt})+1}^{m_{opt}} (\hat{x},v_j)^2}\chi_{\Omega_{\delta/\rho}} = \sqrt{\sum_{j=k^\delta_{dp}(m_{opt})+1}^{m_{opt}}  \sigma_j^{2\nu} (\xi,v_j)^\frac{2\nu}{\nu+1} (\xi,v_j)^\frac{2}{\nu+1}}\chi_{\Omega_{\delta/\rho}}\\
\le &\left( \sum_{j=k^\delta_{dp}(m_{opt})+1}^{m_{opt}}  \sigma_j^{2(\nu+1)} (\xi,v_j)^2\right)^\frac{\nu}{2(\nu+1)} \left( \sum_{j=k^\delta_{dp}(m_{opt})+1}^{m_{opt}}  (\xi,v_j)^2\right)^\frac{1}{2(\nu+1)}\chi_{\Omega_{\delta/\rho}}\\
 \le &\rho^\frac{1}{\nu+1} \left( \sum_{j=k^\delta_{dp}(m_{opt})}^{m_{opt}}  (\hat{y},u_j)^2\right)^\frac{\nu}{2(\nu+1)}\chi_{\Omega_{\delta/\rho}}\\
  \le  &\rho^\frac{1}{\nu+1} \left( \sqrt{\sum_{j=k^\delta_{dp}(m_{opt})}^{m_{opt}}  (y^\delta,u_j)^2} +\sqrt{ \sum_{j=k^\delta_{dp}(m_{opt})}^{m_{opt}}  (\hat{y}-y^\delta,u_j)^2}\chi_{\Omega_{\delta/\rho}}\right)^\frac{\nu}{(\nu+1)}\\
 \le &\rho^\frac{1}{\nu+1} \left( \tau \sqrt{m_{opt}} \delta + \frac{\tau+1}{2}\sqrt{m_{opt}} \delta\right)^\frac{\nu}{\nu+1}
  \le \rho^\frac{1}{\nu+1} \left(\frac{3\tau+1}{2}\right)^\frac{\nu}{\nu+1} \left(\left(\frac{\rho}{\delta}\right)^\frac{1}{(\nu+1)q+1} \delta\right)^\frac{\nu}{\nu+1}\\ 
    = &\left(\frac{3\tau+1}{2}\right)^\frac{\nu}{\nu+1} \rho^{\left(\frac{1}{\nu+1} + \frac{\nu}{((\nu+1)q+1)(\nu+1)}\right)}\left( \delta^\frac{(\nu+1)q}{(\nu+1)q+1}\right)^\frac{\nu}{\nu=1}  = \left(\frac{3\tau+1}{2}\right)^\frac{\nu}{\nu+1} \rho^\frac{q+1}{(\nu+1)q+1} \delta^\frac{\nu}{\nu+1+\frac{1}{q}}
\end{align*}

where we used the definition of $k_{dp}^\delta(m_{opt})$ for the first term in the fifth step. Together with the preceding bound this finishes the proof of Proposition \ref{prop2}.

\end{proof}

We finish the proof of Theorem \ref{th2}. For the decomposition \eqref{sec3:eq1} on $\Omega_{\delta/\rho}$ it holds that

\begin{align*}
\|x_{k^\delta_{dp}} - \hat{x}\|\chi_{\Omega_{\delta/\rho}} &\le \sqrt{\sum_{j=1}^{k^\delta_{dp}} \frac{(y^\delta-\hat{y},u_j)^2}{\sigma_j^2}}\chi_{\Omega_{\delta/\rho}} + \sqrt{\sum_{j=k^\delta_{dp}+1}^\infty (\hat{x},v_j)^2}\chi_{\Omega_{\delta/\rho}}\\
 &\le \frac{1}{\sigma_{k^\delta_{dp}}} \sqrt{\sum_{j=1}^{k^\delta_{dp}}(y^\delta-\hat{y},u_j)^2}\chi_{\Omega_{\delta/\rho}} + C_{\tau,\nu} \rho^\frac{(q+1)}{(\nu+1)q+1} \delta^\frac{\nu}{\nu+1+\frac{1}{q}}\\
  \le  &\frac{1}{\sigma_{\lceil C_{\tau,\nu,q} m_{opt}\rceil}} \sqrt{\sum_{j=1}^{m_{opt}}(y^\delta-\hat{y},u_j)^2}\chi_{\Omega_{\delta/\rho}} + C_{\tau,\nu} \rho^\frac{(q+1)}{(\nu+1)q+1} \delta^\frac{\nu}{\nu+1+\frac{1}{q}} \\
  \le & \lceil C_{\tau,\nu,q} m_{opt}\rceil^\frac{q}{2} \frac{\tau+1}{2}\sqrt{m_{opt}}\delta + C_{\tau,\nu} \rho^\frac{(q+1)}{(\nu+1)q+1} \delta^\frac{\nu}{\nu+1+\frac{1}{q}}\\
  &\le (C_{\tau,\nu,q} +1)^\frac{q}{2}\frac{\tau+1}{2} m_{opt}^\frac{q+1}{2}\delta + C_{\tau,\nu}\rho^\frac{(q+1)}{(\nu+1)q+1} \delta^\frac{\nu}{\nu+1+\frac{1}{q}}
  \le  L_{\tau,\nu,q} \rho^\frac{(q+1)}{(\nu+1)q+1} \delta^\frac{\nu}{\nu+1+\frac{1}{q}},
\end{align*}

with $L_{\tau,\nu,q}$ from Theorem \ref{th2}, where we used Proposition \ref{prop2} in the second, Proposition \ref{prop1} in the third and the definition of $\Omega_{\delta/\rho}$ \eqref{sec3:eq0} in the fourth step. Finally, Proposition \ref{prop0} implies $\mathbb{P}\left(\Omega_{\delta/\rho}\right)\to1$ as $\delta/\rho\to0$ and hence finishes the proof of Theorem \ref{th2}.

\subsection{Proof of Theorem \ref{th3}}

We start with an auxiliary proposition. For $a,b>0$ define

$$f(x):= x^b e^{a x}.$$

\begin{proposition}\label{prop1:th3}

For every $y>0$ the equation $f(x)=y$ has a unique solution $x^*$ in $(0,\infty)$. Moreover, there holds

\begin{align}
x^*&= \frac{1}{a}\log(y) - \frac{1}{a}\log\left( \left(\frac{1}{a}\log(y)\right)^b\right) + o(1)
\end{align}

as $y\to\infty$.

\end{proposition}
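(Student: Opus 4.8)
The plan is to analyze $f(x) = x^b e^{ax}$ on $(0,\infty)$ directly. First I would establish existence and uniqueness: since $b,a>0$, both factors $x^b$ and $e^{ax}$ are positive, continuous and strictly increasing on $(0,\infty)$, hence $f$ is strictly increasing and continuous there; moreover $f(x)\to 0$ as $x\to 0^+$ and $f(x)\to\infty$ as $x\to\infty$. By the intermediate value theorem, for every $y>0$ there is exactly one $x^*\in(0,\infty)$ with $f(x^*)=y$. This part is routine.

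For the asymptotic expansion I would take logarithms of $f(x^*)=y$ to get the implicit relation
\begin{equation}\label{prop1:th3:implicit}
a x^* + b\log(x^*) = \log(y).
\end{equation}
As $y\to\infty$ we have $x^*\to\infty$, and from \eqref{prop1:th3:implicit} the dominant balance is $a x^* \sim \log(y)$, i.e. $x^* = \tfrac{1}{a}\log(y)(1+o(1))$. I would then bootstrap: substitute this leading behaviour into the $\log(x^*)$ term of \eqref{prop1:th3:implicit}, writing $\log(x^*) = \log\bigl(\tfrac{1}{a}\log(y)\bigr) + \log\bigl(x^*/(\tfrac1a\log y)\bigr)$, and note the second summand is $o(1)$ once we know $x^*/(\tfrac1a\log y)\to 1$. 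Solving \eqref{prop1:th3:implicit} for $x^*$ then gives
\begin{equation*}
x^* = \frac{1}{a}\log(y) - \frac{b}{a}\log\!\left(\frac{1}{a}\log(y)\right) + o(1) = \frac{1}{a}\log(y) - \frac{1}{a}\log\!\left(\Bigl(\tfrac{1}{a}\log(y)\Bigr)^{b}\right) + o(1),
\end{equation*}
which is the claimed formula.

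The one place that needs a little care — the "main obstacle", though it is mild — is making the bootstrap rigorous: one must confirm that the correction term $\log(x^*/(\tfrac1a\log y))$ really is $o(1)$ and not merely bounded. I would do this by first proving the crude two-sided bound: from \eqref{prop1:th3:implicit} and $b\log(x^*)>0$ (for $x^*>1$) one gets $x^* < \tfrac1a\log(y)$; for a lower bound, plug $x^* < \tfrac1a\log y$ back into $b\log(x^*)$ to get $a x^* > \log(y) - b\log(\tfrac1a\log y)$, hence $x^* \geq \tfrac1a\log(y) - \tfrac{b}{a}\log(\tfrac1a\log y)$. Dividing by $\tfrac1a\log y$ shows $x^*/(\tfrac1a\log y)\to 1$, so $\log$ of that ratio tends to $0$; feeding this back into \eqref{prop1:th3:implicit} one final time yields the stated expansion with an honest $o(1)$ remainder. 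All of this is elementary real analysis; no probabilistic input is needed here, since this proposition is purely a deterministic lemma about the function $f$ that will later be applied with $y$ of the order $\sigma_j^{-2}$ or similar.
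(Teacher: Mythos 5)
Your proof is correct, but it follows a different route from the paper's. You pass to the logarithmic form $a x^* + b\log(x^*) = \log(y)$ and bootstrap: a crude upper bound $x^* \le \tfrac1a\log(y)$, a matching lower bound obtained by re-substitution, and then one more iteration to upgrade the error to $o(1)$. The paper instead works with $f$ and its inverse directly: it evaluates $f$ at the candidate point $z := \tfrac1a\log(y) - \tfrac1a\log\bigl((\tfrac1a\log(y))^b\bigr)$, finds $f(z) = y\bigl(1 - \log((\tfrac1a\log y)^b)/\log y\bigr)^b \le y$ so that $z \le x^*$ by monotonicity, and then controls the gap via the differential inequality $f'(x) \ge a f(x)$, which gives $(f^{-1})'(t) \le 1/(at)$ and hence $x^* - z \le \tfrac1a\log\bigl(y/f(z)\bigr) \to 0$. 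Both arguments are elementary and both hinge on sandwiching $x^*$ against the explicit candidate; the paper's version yields the remainder as a single closed-form quantity (of order $\log\log y/\log y$) in one shot, while your bootstrap is perhaps the more standard way to handle Lambert-W--type equations and extends more readily to higher-order expansions. Your handling of the one delicate point --- verifying that $\log\bigl(x^*/(\tfrac1a\log y)\bigr) = o(1)$ rather than merely $O(1)$ --- is done properly via the two-sided bound, so there is no gap.
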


\begin{proof}
$f$ is continuous on $[0,\infty)$ and strictly monotonically increasing, with $f(0)=0$ and $\lim_{x\to\infty}f(x)=\infty$, this guarantees the existence of a unique solution  $x^*=x^*(y)$. Set $z=z(y):=\frac{1}{a}\log(y) - \frac{1}{a}\log\left(\left(\frac{1}{a}\log(y)\right)^b\right)$. First of all,

\begin{align*}
f\left(z\right) &= \left(\frac{1}{a}\log(y) - \frac{1}{a}\log\left(\left(\frac{1}{a}\log(y)\right)^b\right)\right)^b e^{a\left(\frac{1}{a}\log(y) - \frac{1}{a}\log\left(\left(\frac{1}{a}\log(y)\right)^b\right)\right)}\\
&=\left(\frac{1}{a}\log(y) - \frac{1}{a}\log\left(\left(\frac{1}{a}\log(y)\right)^b\right)\right)^b \frac{1}{\left(\frac{1}{a}\log(y)\right)^b} y\\
&= y \left( 1 - \frac{\log\left(\left( \frac{1}{a}\log(y)\right)^b\right)}{\log(y)}\right)^b. 
\end{align*}

 Since $y=f(x^*)$ and $f$ is monotonically increasing, the above reasoning implies $z\le x^*$. Note that $f'(x) = bx^{b-1}e^{ax} + ax^k e^{bx} \ge a f(x)$. This together with the above calculation finally yields 

\begin{align*}
0\le x^*-z &= f^{-1}(f(x^*)) - f^{-1}(f(z)) = \int_{f(z)}^{f(x^*)} (f^{-1})'(t) dt = \int_{f(z)}^{f(x^*)} \frac{1}{f'(f^{-1}(t))} dt \le \int_{f(z)}^{f(x^*)} \frac{1}{af(f^{-1}(t))} dt\\
 &\le \int_{f(z)}^{f(x^*)} \frac{1}{at}dt = \frac{1}{a} \log\left(\frac{f(x^*)}{f(z)}\right)= \frac{1}{a}\log\left(\left(1-\frac{\log\left(\left(\frac{1}{a}\log(y)\right)^b\right)}{\log(y)}\right)^{-b}\right)\to 0
\end{align*}

for $y\to\infty$, thus $x^*=z+o(1)$.

\end{proof}

We start with the proof of Theorem \ref{th3}. The usual split gives
\begin{align*}
\sup_{\hat{x}\in\mathcal{X}_{p,\rho}}\E\| x_k^\delta-\hat{x}\|^2 &= \sup_{\hat{x}\in\mathcal{X}_{p,\rho}}\sum_{j=1}^k \frac{\E(y^\delta-\hat{y},u_j)^2}{\sigma_j^2} + \sup_{\hat{x}\in\mathcal{X}_{p,\rho}}\sum_{j=k+1}^\infty(\hat{x},v_j)^2\\
            &= \delta^2 \sum_{j=1}^k e^{aj} + \sup_{\substack{\xi\in\mathcal{X}\\\|\xi\|\le \rho}} \sum_{j=k+1}^\infty (-\log(\sigma_j^2))^{-p}(\xi,v_j)^2\\
            &= \delta^2\frac{e^{a(k+1)}}{e-1} + (a(k+1))^{-p} \rho^2.
\end{align*}

We solve the minimization problem by substituting real-valued $x$ for $k+1$ for a moment and solve by standard means (the second derivative is positive, so if suffices to set the first derivative of the right hand side to zero and solve for $x$). We obtain the equation

\begin{align*}
 a\delta^2 \frac{e^{ax}}{e-1} -p a^{-p} x^{-(p+1)} \rho^2 \stackrel{!}{=} 0 \quad \Longrightarrow x^{p+1}e^{ax} = \frac{p(e-1)}{a^{p+1}} \frac{\rho^2}{\delta^2}.
\end{align*}

By Proposition \ref{prop1:th3} (with $b=p+1$ and $y = \frac{p(e-1)}{a^{p+1}} \frac{\rho^2}{\delta^2})$), the unique solution $x_{opt}$ fulfills

$$x_{opt} = \frac{1}{a}\log(y) - \frac{1}{a} \log\left(\left(\frac{1}{a}\log(y)\right)^{p+1}\right)+o(1)$$

 for $y\to\infty$ (which corresponds to $\delta/\rho\to 0$). We handle the data propagation error (variance) and the approximation error (bias) separately. For the approximation error,

\begin{align*}
\rho^2(ax_{opt})^{-p} &= \rho^2\left(\log(y) - \log\left(\left(\frac{1}{a}\log(y)\right)^{p+1}\right)\right)^{-p}(1+o(1)) = \rho^2\log(y)^{-p}(1+o(1))\\
&= \rho^2\left(\log\left(\frac{p(e-1)}{a^{p+1}}\right) + \log\left(\frac{\rho^2}{\delta^2}\right)\right)^{-p}\left(1+o(1)\right)=\rho^2\left(-\log\left(\frac{\delta^2}{\rho^2}\right)\right)^{-p}(1+o(1))
\end{align*}

for $\frac{\delta}{\rho}\to 0$. For the data propagation error

\begin{align*}
 \frac{\delta^2}{e-1} e^{a x_{opt}} &= \frac{\delta^2}{e-1} \frac{y}{\left(\frac{1}{a} \log(y)\right)^{p+1}}(1+o(1)) = p \rho^2 \log\left(\frac{p(e-1)}{a^{p+1}} \frac{\rho^2}{\delta^2}\right)^{-(p+1)} (1+o(1))\\
 &=  p \rho^2\left(-\log\left(\frac{\delta^2}{\rho^2}\right)\right)^{-(p+1)}(1+o(1))= \rho^2 \left(-\log\left(\frac{\delta^2}{\rho^2}\right)\right)^{-p} o(1)
\end{align*}

for $\frac{\delta}{\rho}\to 0$. The above rates stay the same, if we change $x_{opt}$ to any $x\in[x_{opt}-1,x_{opt}+1]$. Therefore, 

$$\inf_{k\in\N} \sup_{\hat{x}\in\mathcal{X}_{p,\rho}}\E\|x_k^\delta-\hat{x}\|^2 = \rho^2\left(-\log\left(\frac{\delta^2}{\rho^2}\right)\right)^{-p}o(1) + \rho^2\left(-\log\left(\frac{\delta^2}{\rho^2}\right)\right)^{-p}(1+o(1)) = \rho^2\left(-\log\left(\frac{\delta^2}{\rho^2}\right)\right)^{-p}(1+o(1))$$

as $\delta/\rho\to \infty$.

\subsection{Proof of Theorem \ref{th4}}

We argue along the lines of the proof of Theorem \ref{th2} and set

\begin{equation}\label{eq1:th4}
m_{opt}=m_{opt}(\delta,\rho,p,a):=\frac{1}{a}\log\left(\frac{\rho^2}{\delta^2}\right) - \frac{1}{a}\log\left(\left(\frac{1}{a}\log\left(\frac{\rho^2}{\delta^2}\right)\right)^{p+1}\right). 
\end{equation}

Again, the stopping index $k_{dp}^\delta(m)$ is in essence bounded by $m_{opt}$.

\begin{proposition}\label{prop1:th4}
It holds that

$$k_{dp}^\delta(m)\chi_{\Omega_{\delta/\rho}} \le \max\left(1, m_{opt}(\delta,\rho,p,a) + C_{a,p,\tau} + o(1)\right)\quad\mbox{for all }m\in\N,$$

where $C_{a,p,\tau}=\frac{1}{a}\log\left(\frac{4}{(\tau-1)^2a^p}\right)+1$ and $m_{opt}$ given in \eqref{eq1:th4}.

\end{proposition}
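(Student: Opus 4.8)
The plan is to mirror the proof of Proposition~\ref{prop1}, with the H\"older source condition and the algebraic inversion replaced by the logarithmic source condition and the asymptotic inversion of Proposition~\ref{prop1:th3}. As there, $k_{dp}^\delta(m)\le m$ holds by definition, so the asserted bound is trivial whenever $m<m_{opt}$ (now $m_{opt}$ refers to \eqref{eq1:th4}) or $k_{dp}^\delta(m)\le 1$; hence one may assume $m\ge m_{opt}$ and $k:=k_{dp}^\delta(m)\ge 2$, so that the defining inequality of the discrepancy principle fails at $k-1$, i.e. $\tau\sqrt{m}\,\delta<\sqrt{\sum_{j=k}^m(y^\delta,u_j)^2}$. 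Splitting into signal and noise and restricting to $\Omega_{\delta/\rho}$ (defined as in \eqref{sec3:eq3} with the present $m_{opt}$, so that $\mathbb{P}(\Omega_{\delta/\rho})\to1$ by Proposition~\ref{prop0}) gives
\[
\tau\sqrt{m}\,\delta\,\chi_{\Omega_{\delta/\rho}}<\sqrt{\sum_{j=k}^m(\hat y,u_j)^2}+\frac{\tau+1}{2}\sqrt{m}\,\delta .
\]

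To bound the signal term I would use $(\hat y,u_j)=\sigma_j(\hat x,v_j)$ together with the logarithmic source condition, which for $\sigma_j^2=e^{-aj}$ reads $(\hat x,v_j)=(-\log\sigma_j^2)^{-p/2}(\xi,v_j)=(aj)^{-p/2}(\xi,v_j)$, and the monotonicity of $j\mapsto e^{-aj}(aj)^{-p}$; this yields $\sqrt{\sum_{j=k}^m(\hat y,u_j)^2}\le e^{-ak/2}(ak)^{-p/2}\rho$. Inserting this, rearranging, then squaring and using $m\ge m_{opt}$, one arrives at
\[
k^{p}e^{ak}\,\chi_{\Omega_{\delta/\rho}}<\frac{4}{(\tau-1)^2a^{p}}\,\frac{\rho^2}{m_{opt}\,\delta^2}=:y .
\]
The left-hand side is $f(k)$ for $f(x)=x^{p}e^{ax}$, which is strictly increasing, and $y\to\infty$ as $\delta/\rho\to0$ because $\rho^2/\delta^2\to\infty$ dominates the logarithmic growth of $m_{opt}$; hence Proposition~\ref{prop1:th3} (with $b=p$) gives $k<f^{-1}(y)=\tfrac1a\log y-\tfrac1a\log\bigl((\tfrac1a\log y)^{p}\bigr)+o(1)$.

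It then remains to substitute \eqref{eq1:th4} and simplify. Writing $L:=\log(\rho^2/\delta^2)$, one has $m_{opt}=\tfrac{L}{a}-\tfrac{p+1}{a}\log(L/a)=\tfrac{L}{a}(1+o(1))$, hence $\log m_{opt}=\log(L/a)+o(1)$ and $\log y=L-\log L+\log\bigl(\tfrac{4}{(\tau-1)^2a^{p}}\bigr)+\log a+o(1)$; feeding this back into $f^{-1}(y)$ and collecting the $\log L$ terms and the constants yields $f^{-1}(y)=m_{opt}+\tfrac1a\log\bigl(\tfrac{4}{(\tau-1)^2a^{p}}\bigr)+o(1)=m_{opt}+C_{a,p,\tau}-1+o(1)$. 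Since $k_{dp}^\delta(m)$ is an integer lying strictly below this real number, $k_{dp}^\delta(m)\chi_{\Omega_{\delta/\rho}}\le m_{opt}+C_{a,p,\tau}+o(1)$, uniformly in $m$ because the $o(1)$ depends only on $\delta/\rho$ through $y$; combined with the trivial case this gives the $\max(1,\cdot)$ bound for all $m\in\mathbb{N}$.

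The main obstacle is this final bookkeeping: the inversion formula of Proposition~\ref{prop1:th3} contains nested logarithms ($\log y$ carries a $\log m_{opt}$, and $m_{opt}$ carries a $\log(\rho^2/\delta^2)$), and one must verify that every double-logarithmic correction coming from $m_{opt}$ cancels exactly against the ones generated by $f^{-1}$, leaving precisely the constant $\tfrac1a\log\bigl(\tfrac{4}{(\tau-1)^2a^{p}}\bigr)$ and an $o(1)$ — this is what fixes the value of $C_{a,p,\tau}$. The extra summand $+1$ in $C_{a,p,\tau}$, together with the $\max(1,\cdot)$, provides the slack needed to absorb the integrality of $k_{dp}^\delta(m)$ and the non-integrality of the real number $m_{opt}$ in \eqref{eq1:th4}, and to cover the small-$m$ regime where one only has $k_{dp}^\delta(m)\le m$.
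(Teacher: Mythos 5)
Your proposal is correct and follows essentially the same route as the paper: assume $m\ge m_{opt}$ and $k\ge 2$, bound the signal term by $(ak)^{-p/2}e^{-ak/2}\rho$ via the logarithmic source condition, rearrange on $\Omega_{\delta/\rho}$ to $k^pe^{ak}\le \frac{4}{(\tau-1)^2a^p}\,\rho^2/(m_{opt}\delta^2)$, and invert with Proposition \ref{prop1:th3} (with $b=p$); your logarithmic bookkeeping and the resulting constant agree with the paper's. The only cosmetic difference is that the paper rewrites the constant as an exponential of $C_{a,p,\tau}-1$ before applying the inversion, whereas you identify $C_{a,p,\tau}-1$ at the end.
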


\begin{proof}
Since $k_{dp}^\delta(m)\le m$ we can assume that $m\ge m_{opt}$. We write $k=k^\delta_{dp}(m)$ for short and argue as in the proof of Proposition \ref{prop1}. Assuming $k\ge 2$ we  obtain

\begin{align}
\tau \sqrt{m}\delta\chi_{\Omega_{\delta/\rho}}\le &\sqrt{\sum_{j=k}^m(\hat{y},u_j)^2} + \sqrt{\sum_{j=k}^m(y^\delta-\hat{y},u_j)^2}\chi_{\Omega_{\delta/\rho}} \le \sqrt{\sum_{j=k}^m \sigma_j^2(-\log(\sigma_j^2))^{-p}(\xi,v_j)^2} + \frac{\tau+1}{2}\sqrt{m}\delta\\
 \le &(ak)^{-\frac{p}{2}}e^{-\frac{ak}{2}} \rho + \frac{\tau+1}{2}\sqrt{m}\delta\\
\Longrightarrow &k^pe^{ak}\chi_{\Omega_{\delta/\rho}} \le  \frac{4}{a^p(\tau-1)^2} \frac{\rho^2}{m\delta^2} \le a e^{C_{a,p,\tau}-1}\frac{\rho^2}{m_{opt}\delta^2}.
\end{align}

We write $C=e^{aC_{a,p,\tau}-1}$ for short and apply Proposition \ref{prop1:th3}. Because of the monotonicity of $f$ there thus holds

$$k \chi_{\Omega_{\delta/\rho}}\le \frac{1}{a}\log\left(C\frac{\rho^2}{m_{opt}\delta^2}\right) - \frac{1}{a}\log\left(\left(\frac{1}{a}\log\left(C\frac{\rho^2}{m_{opt}\delta^2}\right)\right)^{p}\right)+1.$$

The summand $1$ is due to $k\in\N$. We now use  the fact that, for (positive) functions $f,g$ with $g=o(f)$ there holds

$$\log(f+g) = \log(f) + \log(f+g)-\log(f) = \log(f) + \log\left(1+\frac{g}{f}\right) = \log(f) + o(1),$$

to obtain

\begin{align*}
& \frac{1}{a}\log\left(C\frac{\rho^2}{m_{opt}\delta^2}\right) - \frac{1}{a}\log\left(\left(\frac{1}{a}\log\left(C\frac{\rho^2}{m_{opt}\delta^2}\right)\right)^p\right)+1\\
\le &\frac{1}{a}\log\left(\frac{\rho^2}{\delta^2}\right) +\frac{1}{a}\log(C) - \frac{1}{a}\log(m_{opt}) - \frac{1}{a}\log\left(\left(\frac{1}{a}\log\left(\frac{\rho^2}{\delta^2}\right) + \frac{1}{a}\log\left(\frac{C}{m_{opt}}\right)\right)^p\right)+1\\
= &\frac{1}{a}\log\left(\frac{\rho^2}{\delta^2}\right) - \frac{1}{a}\log\left(\frac{1}{a}\log\left(\frac{\rho^2}{\delta^2}\right) - \frac{1}{a}\log\left(\left(\frac{1}{a}\log\left(\frac{\rho^2}{\delta^2}\right)\right)^{p+1}\right)\right) - \frac{1}{a}\log\left(\left(\frac{1}{a}\log\left(\frac{\rho^2}{\delta^2}\right)\right)^p\right) + C_{a,p,\tau} + o(1)\\
= &\frac{1}{a}\log\left(\frac{\rho^2}{\delta^2}\right) - \frac{1}{a}\log\left(\frac{1}{a}\log\left(\frac{\rho^2}{\delta^2}\right)\right) - \frac{1}{a}\log\left(\left(\frac{1}{a}\log\left(\frac{\rho^2}{\delta^2}\right)\right)^p\right) + C_{a,p,\tau} + o(1)\\
=&\frac{1}{a}\log\left(\frac{\rho^2}{\delta^2}\right) - \frac{1}{a}\log\left(\left(\frac{1}{a}\log\left(\frac{\rho^2}{\delta^2}\right)\right)^{p+1}\right) + C_{a,p,\tau}+o(1)
\end{align*}

for $\delta/\rho\to 0$. This proves the claim.

\end{proof}

As in the proof of Theorem \ref{th2}, we will use this fact to bound the data propagation error. 

\begin{proposition}\label{prop3:th4}
 It holds that
 
 $$\mathbb{P}\left(\sum_{j=1}^{k_{dp}^\delta} \frac{(y^\delta-\hat{y},u_j)^2}{\sigma_j^2} \le \rho^2\left(-\log\left(\frac{\delta^2}{\rho^2}\right)\right)^{-p} o(1)\right)\to 1$$
 
 for $\delta/\rho\to 0$.
\end{proposition}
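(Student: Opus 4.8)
The plan is to replace the random truncation index $k_{dp}^\delta$ by a deterministic upper bound valid on the high‑probability event $\Omega_{\delta/\rho}$, thereby reducing the data propagation error to a geometric sum with a fixed number of terms, then control the expectation of that sum, and finally pass to a statement in probability via Markov's inequality.

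First I would invoke Proposition \ref{prop1:th4}. Since $k_{dp}^\delta=\max_{m\in\N}k_{dp}^\delta(m)$, the bound stated there holds simultaneously in $m$, so $k_{dp}^\delta\chi_{\Omega_{\delta/\rho}}\le n_\delta:=\big\lfloor\max\big(1,m_{opt}+C_{a,p,\tau}+o(1)\big)\big\rfloor$, a deterministic integer which, because $m_{opt}\to\infty$, satisfies $n_\delta=m_{opt}+O(1)$ as $\delta/\rho\to0$. As $\sigma_j^2=e^{-aj}$ and all summands are nonnegative, on $\Omega_{\delta/\rho}$ one has
$$\sum_{j=1}^{k_{dp}^\delta}\frac{(y^\delta-\hat y,u_j)^2}{\sigma_j^2}\,\chi_{\Omega_{\delta/\rho}}\le\sum_{j=1}^{n_\delta}e^{aj}(y^\delta-\hat y,u_j)^2.$$

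Next I would take expectations of the deterministic‑length sum on the right. Using $\E(y^\delta-\hat y,u_j)^2=\delta^2$, the geometric estimate $\sum_{j=1}^{n_\delta}e^{aj}\le e^{an_\delta}/(1-e^{-a})$, the relation $n_\delta=m_{opt}+O(1)$, and the explicit form \eqref{eq1:th4}, which gives $e^{am_{opt}}=(\rho^2/\delta^2)\big(\tfrac1a\log(\rho^2/\delta^2)\big)^{-(p+1)}$, one obtains
$$\E\Big[\sum_{j=1}^{n_\delta}e^{aj}(y^\delta-\hat y,u_j)^2\Big]=\delta^2\sum_{j=1}^{n_\delta}e^{aj}\le O(1)\,\rho^2\Big(\tfrac1a\log\big(\tfrac{\rho^2}{\delta^2}\big)\Big)^{-(p+1)}=\rho^2\Big(-\log\big(\tfrac{\delta^2}{\rho^2}\big)\Big)^{-p}o(1),$$
since $\big(\tfrac1a\log(\rho^2/\delta^2)\big)^{-(p+1)}=a^{p+1}\big(-\log(\delta^2/\rho^2)\big)^{-(p+1)}$ and the extra factor $\big(-\log(\delta^2/\rho^2)\big)^{-1}\to0$.

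Finally I would convert this first–moment estimate into the asserted probability bound. Writing the right‑hand side as $\rho^2\big(-\log(\delta^2/\rho^2)\big)^{-p}\psi(\delta)$ with $\psi(\delta)\to0$, Markov's inequality yields
$$\mathbb{P}\Big(\sum_{j=1}^{n_\delta}e^{aj}(y^\delta-\hat y,u_j)^2>\sqrt{\psi(\delta)}\,\rho^2\big(-\log(\tfrac{\delta^2}{\rho^2})\big)^{-p}\Big)\le\sqrt{\psi(\delta)}\to0,$$
so with the deterministic null sequence $\varepsilon_\delta:=\sqrt{\psi(\delta)}$, and after intersecting with $\Omega_{\delta/\rho}$ (which has probability tending to $1$ by Proposition \ref{prop0}), the claim follows, the symbol "$o(1)$" being read as $\varepsilon_\delta$. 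The only genuinely delicate point — the main obstacle — is the very first reduction: one must resist the crude bound $\sum_{j=1}^{k}e^{aj}(y^\delta-\hat y,u_j)^2\le e^{ak}\sum_{j=1}^{k}(y^\delta-\hat y,u_j)^2$, which wastes a factor $k\asymp\tfrac1a\log(\rho^2/\delta^2)$ and would leave only an $O(1)$, not an $o(1)$, in front of $\rho^2\big(-\log(\delta^2/\rho^2)\big)^{-p}$; the geometric weights $e^{aj}$ must be kept inside the sum so that its total mass is $\asymp e^{ak}$ with no logarithmic loss.
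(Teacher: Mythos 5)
Your proposal is correct and follows essentially the same route as the paper: bound $k_{dp}^\delta$ on $\Omega_{\delta/\rho}$ by the deterministic index from Proposition \ref{prop1:th4}, compute the expectation of the resulting geometric sum, and convert to a probability statement via Markov's inequality combined with $\mathbb{P}(\Omega_{\delta/\rho})\to1$. Your explicit choice $\varepsilon_\delta=\sqrt{\psi(\delta)}$ is a slightly cleaner way of phrasing the paper's remark that the $o(1)$ in the threshold ``can be chosen to converge to $0$ arbitrarily slowly,'' and your closing observation about keeping the weights $e^{aj}$ inside the sum correctly identifies why the expectation is taken directly rather than using the uniform bound from $\Omega_{\delta/\rho}$ on this term.
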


\begin{proof}
Because of Proposition \ref{prop1:th4} it holds that

\begin{align*}
&\mathbb{P}\left(\sum_{j=1}^{k_{dp}^\delta} \frac{(y^\delta-\hat{y},u_j)^2}{\sigma_j^2} \le \rho^2\left(-\log\left(\frac{\delta^2}{\rho^2}\right)\right)^{-p} o(1)\right)\\
\ge &\mathbb{P}\left(\sum_{j=1}^{m_{opt}+C_{a,p,\tau}+o(1)} \frac{(y^\delta-\hat{y},u_j)^2}{\sigma_j^2} \le \rho^2\left(-\log\left(\frac{\delta^2}{\rho^2}\right)\right)^{-p} o(1),~\Omega_{\delta/\rho}\right)\\
\ge &1 - \mathbb{P}\left(\sum_{j=1}^{m_{opt}+C_{a,p\tau}+o(1)} \frac{(y^\delta-\hat{y},u_j)^2}{\sigma_j^2} > \rho^2\left(-\log\left(\frac{\delta^2}{\rho^2}\right)\right)^{-p} o(1)\right) - \mathbb{P}\left(\Omega_{\delta/\rho}^C\right).
\end{align*}

By Proposition \ref{prop1} there holds $\mathbb{P}\left(\Omega_{\delta/\rho}\right)\to 1$ as $\delta/\rho\to 0$. Now Markov's inequality  and the definition of $m_{opt}$ \eqref{eq1:th4} imply

\begin{align*}
&\mathbb{P}\left(\sum_{j=1}^{m_{opt}+C_{a,p\tau}+o(1)} \frac{(y^\delta-\hat{y},u_j)^2}{\sigma_j^2} > \rho^2\left(-\log\left(\frac{\delta^2}{\rho^2}\right)\right)^{-p} o(1)\right) \le \frac{ \sum_{j=1}^{m_{opt}+C_{a,p,\tau}+o(1)} e^{aj}\E(Z,u_j)^2\delta^2}{\rho^2\left(-\log\left(\frac{\delta^2}{\rho^2}\right)\right)^{-p}o(1)}\\
&= \frac{e^{am_{opt}} e^{a(1+C_{a,p,\tau}+o(1))} \delta^2}{\rho^2\left(-\log\left(\frac{\delta^2}{\rho^2}\right)\right)^{-p}o(1)} = \frac{\frac{\rho^2}{\delta^2} \left(\frac{1}{a}\log\left(\frac{\rho^2}{\delta^2}\right)\right)^{-(p+1)}\delta^2}{\rho^2\left(\log\left(\frac{\rho^2}{\delta^2}\right)\right)^{-p}o(1)}e^{a(1+C_{a,p,\tau}+o(1))}\\
 = &\log\left(\frac{\rho^2}{\delta^2}\right)^{-1} \frac{a^{p+1}e^{a(1+C_{a,p,\tau}+o(1))}}{o(1)} \to 0
\end{align*}

as $\delta/\rho\to 0$ (since the $o(1)$ in the nominator can be chosen such that it converges to $0$ arbitrarily slowly). This proves the claim of the Proposition.

\end{proof}

Regarding the approximation error we again rely on $k_{dp}^\delta\ge k_{dp}^\delta(m)$ and show that for $m=m_{opt}$ the approximation error is asymptotically optimal.

\begin{proposition}\label{prop2:th4}
There holds

$$\sqrt{\sum_{j=k_{dp}^\delta(m_{opt})+1}^\infty(\hat{x},v_j)^2}\chi_{\Omega_{\delta/\rho}} \le \rho\left(-\log\left(\frac{\delta^2}{\rho^2}\right)\right)^{-\frac{p}{2}}(1+o(1))$$

as $\delta/\rho\to0$, with $m_{opt}$ given in \eqref{eq1:th4}.

\end{proposition}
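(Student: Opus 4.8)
The plan is to follow the template of Proposition \ref{prop2}, but to split the approximation error with a \emph{shrinking} offset instead of cutting exactly at $m_{opt}$; a naive split at $m_{opt}$ would make the two resulting pieces each carry the full optimal order, which only yields a multiplicative constant $2$ rather than the required $1+o(1)$. Write $k:=k_{dp}^\delta(m_{opt})$ and fix an auxiliary $R=R_{\delta/\rho}\in\N$ with $R\to\infty$ and $R/m_{opt}\to0$ as $\delta/\rho\to0$ (for instance $R=\lceil\sqrt{m_{opt}}\rceil$). I would bound
$$\sum_{j=k+1}^\infty(\hat{x},v_j)^2 \le \sum_{j=k+1}^{m_{opt}-R}(\hat{x},v_j)^2 + \sum_{j=m_{opt}-R+1}^\infty(\hat{x},v_j)^2,$$
where the first sum is read as $0$ whenever $k\ge m_{opt}-R$ (in that trivial subcase $\sum_{j=k+1}^\infty(\hat{x},v_j)^2\le(a(k+1))^{-p}\rho^2\le(a(m_{opt}-R))^{-p}\rho^2$ already finishes the argument). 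For the tail I would use only the source condition: since $(\hat{x},v_j)^2=(aj)^{-p}(\xi,v_j)^2$ with $\sum_j(\xi,v_j)^2\le\rho^2$ and $j\mapsto(aj)^{-p}$ decreasing, the tail is at most $(a(m_{opt}-R))^{-p}\rho^2$; using $am_{opt}=\log(\rho^2/\delta^2)-\log((\tfrac1a\log(\rho^2/\delta^2))^{p+1})=\log(\rho^2/\delta^2)(1+o(1))$ and $R/m_{opt}\to0$, this equals $\rho^2\left(-\log(\delta^2/\rho^2)\right)^{-p}(1+o(1))$, i.e. exactly the optimal order.

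For the head I would exploit the huge gap between the spectral weights. From $(\hat{y},u_j)=\sigma_j(\hat{x},v_j)$ and $\sigma_j^2=e^{-aj}$ one gets $(\hat{x},v_j)^2=e^{aj}(\hat{y},u_j)^2\le e^{a(m_{opt}-R)}(\hat{y},u_j)^2$ for $j\le m_{opt}-R$, hence $\sum_{j=k+1}^{m_{opt}-R}(\hat{x},v_j)^2\le e^{a(m_{opt}-R)}\sum_{j=k+1}^{m_{opt}}(\hat{y},u_j)^2$. On $\Omega_{\delta/\rho}$ the defining relation of $k=k_{dp}^\delta(m_{opt})$ gives $\sqrt{\sum_{j=k+1}^{m_{opt}}(y^\delta,u_j)^2}\le\tau\sqrt{m_{opt}}\delta$, and the measurement bound built into $\Omega_{\delta/\rho}$ at $m=m_{opt}$ adds at most $\tfrac{\tau+1}{2}\sqrt{m_{opt}}\delta$, so $\sum_{j=k+1}^{m_{opt}}(\hat{y},u_j)^2\le\left(\tfrac{3\tau+1}{2}\right)^2m_{opt}\delta^2$ on $\Omega_{\delta/\rho}$. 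Plugging in $e^{am_{opt}}=\tfrac{\rho^2}{\delta^2}\left(\tfrac1a\log(\rho^2/\delta^2)\right)^{-(p+1)}$ and $m_{opt}=\tfrac1a\log(\rho^2/\delta^2)(1+o(1))$, the head is bounded by $e^{-aR}\left(\tfrac{3\tau+1}{2}\right)^2a^p\rho^2\left(-\log(\delta^2/\rho^2)\right)^{-p}(1+o(1))=\rho^2\left(-\log(\delta^2/\rho^2)\right)^{-p}\,o(1)$, because $R\to\infty$ forces $e^{-aR}\to0$. Adding the two contributions and taking square roots gives $\sqrt{\sum_{j=k+1}^\infty(\hat{x},v_j)^2}\chi_{\Omega_{\delta/\rho}}\le\rho\left(-\log(\delta^2/\rho^2)\right)^{-p/2}(1+o(1))$, which is the claim (this bounds the approximation error at $k_{dp}^\delta(m_{opt})$, hence — the approximation error being nonincreasing in the truncation level — also at $k_{dp}^\delta\ge k_{dp}^\delta(m_{opt})$).

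The one genuinely delicate point is the calibration of the offset $R$: it must tend to infinity so that the $e^{-aR}$ gain makes the head term negligible, yet it must stay $o(m_{opt})$ so that replacing $m_{opt}$ by $m_{opt}-R$ in the source-condition tail costs only a factor $(1-R/m_{opt})^{-p}=1+o(1)$. These two requirements are compatible precisely because $m_{opt}$ grows only like $\log(\rho^2/\delta^2)$, leaving ample room (e.g. $R=\lceil\sqrt{m_{opt}}\rceil$). Everything else is the same kind of $o(\cdot)$-bookkeeping with logarithmic quantities already performed in the proof of Theorem \ref{th3}.
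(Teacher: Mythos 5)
Your proof is correct, but it takes a genuinely different route from the paper's. The paper splits exactly at $m_{opt}$, writes $\sum_{j=k+1}^{m_{opt}}(\hat{x},v_j)^2\le\alpha\rho^2$ and $\sum_{j=m_{opt}+1}^\infty(\hat{x},v_j)^2\le\beta\rho^2$ with $\alpha+\beta\le1$, and invokes the interpolation inequality of Proposition 2 in \cite{hohage2000regularization} to convert the discrepancy bound $\sum_{j=k+1}^{m_{opt}}(\hat{y},u_j)^2\le\bigl(\tfrac{3\tau+1}{2}\bigr)^2m_{opt}\delta^2$ into a head bound of the form $(\alpha+\varepsilon)\rho^2\bigl(-\log(\delta^2/\rho^2)\bigr)^{-p}(1+o(1))$; the asymptotic constant $1$ then emerges because head and tail carry weights $\alpha$ and $\beta$ summing to at most $1$, after some $\varepsilon$-bookkeeping. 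You instead move the splitting point to $m_{opt}-R$ with $R\to\infty$ and $R=o(m_{opt})$, so that the crude spectral bound $\sigma_j^{-2}\le e^{a(m_{opt}-R)}$ already makes the head a genuine $o(1)$ of the optimal order (via the factor $e^{-aR}$), while the tail, handled by the source condition alone, carries the full constant $1+o(1)$ at the cost of only $(1-R/m_{opt})^{-p}=1+o(1)$. Your route is more elementary --- it avoids the external interpolation result entirely --- and it is transparently uniform in $\hat{x}\in\mathcal{X}_{p,\rho}$, which is exactly what the paper's $\alpha,\beta,\varepsilon$ decomposition is engineered to extract from Hohage's inequality. What it gives up is generality: trading the $e^{-aR}$ gain on the head against the $1+o(1)$ loss on the tail hinges on the singular values being exponential and the source condition logarithmic (so that $m_{opt}$ grows only logarithmically and leaves room for such an $R$), whereas the interpolation-based argument transfers to other index functions. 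Both proofs use the same two external inputs at the same places: the defining relation of $k_{dp}^\delta(m_{opt})$ together with the event $\Omega_{\delta/\rho}$ to control $\sum_{j=k+1}^{m_{opt}}(\hat{y},u_j)^2$, and the monotonicity $k_{dp}^\delta\ge k_{dp}^\delta(m_{opt})$ to pass to the final stopping index.
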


\begin{proof}
Let $\varepsilon>0$ be arbitrary. We write $k=k_{dp}^\delta(m_{opt})+1$ and $m=m_{opt}$. First of all, there exist $\alpha=\alpha(\hat{x})$ and $\beta=\beta(\hat{x})$ with $\alpha,\beta\ge 0$ and $\alpha+\beta\le 1$ and

$$\sum_{j=k}^m(\hat{x},v_j)^2 \le \alpha \rho^2\qquad \mbox{and}\qquad \sum_{j=m+1}^\infty(\hat{x},v_j)^2 \le \beta \rho^2.$$

We apply Proposition 2 of \cite{hohage2000regularization} (and use equation (10) therein) to $x^\dagger=\sum_{j=k}^m(\hat{x},v_j)^2$ and $T=K$ and obtain

$$\sum_{j=k}^m(\hat{x},v_j)^2 \le (a+\varepsilon) \rho^2 \left(-\log\left(\frac{\sum_{j=k}^m(\hat{y},u_j)^2}{(\alpha+\varepsilon)\rho^2}\right)\right)^{-p}(1+o(1))$$

for $\delta/\rho\to0$. By definition of $k=k_{dp}^\delta(m_{opt})$ there holds

$$\sum_{j=k}^m(\hat{y},u_j)^2\chi_{\Omega_{\delta/\rho}}\le 2 \sum_{j=k}^m(y^\delta,u_j)^2 + 2 \sum_{j=k}^m(y^\delta-\hat{y},u_j)^2\chi_{\Omega_{\delta/\rho}} \le 2\tau^2 m \delta^2 + 2 \left(\frac{\tau+1}{2}\right)^2 m \delta^2 \le 4 \tau^2 m \delta^2.$$

Therefore,

\begin{align*}
&\sum_{j=k}^m(\hat{x},v_j)^2 \chi_{\Omega_{\delta/\rho}} \le (\alpha+\varepsilon)\rho^2\left(-\log\left(\frac{4\tau^2m \delta^2}{(\alpha+\varepsilon)\rho^2}\right)\right)^{-p}(1+o(1))\\
\le&(\alpha+\varepsilon)\rho^2\left(-\log\left(\frac{\delta^2}{(\alpha+\varepsilon)\rho^2}\right) - \log\left(\frac{4\tau^2}{a}\left(\log\left(\frac{\rho^2}{\delta^2}\right) - \log\left(\left(\frac{1}{a}\log\left(\frac{\rho^2}{\delta^2}\right)\right)^{p+1}\right)\right)\right)\right)^{-p}(1+o(1))\\
\le &(\alpha+\varepsilon)\rho^2\left(-\log\left(\frac{\delta^2}{(a+\varepsilon)\rho^2}\right)\right)^{-p}(1+o(1)). 
\end{align*}

 Moreover,

$$\sum_{j=m+1}^\infty (\hat{x},v_j)^2 \le \left(-\log(\sigma^2_{m+1})\right)^{-p} \sum_{j=m+1}^\infty (\xi,v_j)^2 \le \left(-\log\left( a m\right)\right)^{-p}\beta \rho^2= \beta \rho^2 \left(-\log\left(\frac{\delta^2}{\rho^2}\right)\right)^{-p}(1+o(1)).$$

Adding the estimates we obtain

\begin{align*}
&\sum_{j=k}^\infty(\hat{x},v_j)^2 \chi_{\Omega_{\delta/\rho}} = \sum_{j=k}^m(\hat{x},v_j)^2+\sum_{j=m+1}^\infty(\hat{x},v_j)^2\\
\le &(\alpha+\varepsilon)\rho^2\left(-\log\left(\frac{\delta^2}{(\alpha+\varepsilon)\rho^2}\right)\right)^{-p}(1+o(1)) + \beta\rho^2\left(-\log\left(\frac{\delta^2}{\rho^2}\right)\right)^{-p}(1+o(1))\\
\le &(1+\varepsilon)\rho^2\left(-\log\left(\frac{\delta^2}{\rho^2}\right)\right)^{-p}(1+o(1))\\
&\qquad + (\alpha+\varepsilon)\rho^2\left(\left(-\log\left(\frac{\delta^2}{(\alpha+\varepsilon)\rho^2}\right)\right)^{-p} - \left(-\log\left(\frac{\delta^2}{\rho^2}\right)\right)^{-p}\right)(1+o(1)).
\end{align*}

We bound the last term. For $\frac{\delta^2}{\rho^2}<\varepsilon$ there holds 

\begin{align*}
&\sup_{\alpha\in[0,1]}(\alpha+\varepsilon)\rho^2\left(\left(-\log\left(\frac{\delta^2}{(\alpha+\varepsilon)\rho^2}\right)\right)^{-p} - \left(-\log\left(\frac{\delta^2}{\rho^2}\right)\right)^{-p}\right)\\
\le &(1+\varepsilon)\rho^2\sup_{\alpha\in[0,1]}\left(\frac{1}{\left(\log(a+\varepsilon) +\log\left(\frac{\delta^2}{\rho^2}\right)\right)^p} - \frac{1}{\left(\log\left(\frac{\rho^2}{\delta^2}\right)\right)^p}\right)\\
\le &(1+\varepsilon)\rho^2\left(\frac{1}{\left(\log(\varepsilon) +\log\left(\frac{\delta^2}{\rho^2}\right)\right)^p} - \frac{1}{\left(\log\left(\frac{\rho^2}{\delta^2}\right)\right)^p}\right)=(1+\varepsilon)\rho^2x^{-p} \frac{x^p-(c+x)^p}{(c+x)^p} 
\end{align*}

with $x=\log\left(\frac{\rho^2}{\delta^2}\right)$ and $c=\log(\varepsilon)$, where we used $\alpha\le 1$ and $\alpha> 0$ in the first and second step respectively. Finally,

$$\lim_{x\to\infty} \frac{x^p-(x+c)^p}{(x+c)^p} = \lim_{x\to\infty} \left(\frac{1}{1+\frac{c}{x}}\right)^p - 1  = 0$$

and we obtain

$$\sum_{j=k}^\infty(\hat{x},v_j)^2\chi_{\Omega_{\delta/\rho}} \le (1+\varepsilon)\rho^2\left(-\log\left(\frac{\delta^2}{\rho^2}\right)\right)^{-p}(1+o(1))$$

for $\delta/\rho\to 0$. This finishes the proof, since $\varepsilon$ was arbitrary.

\end{proof}

Finally, combining Proposition \ref{prop2:th4} and \ref{prop3:th4} concludes the proof of Theorem \ref{th4}.

\bibliographystyle{unsrt}
\bibliography{references}
\end{document}